\pgfplotsset{width=7cm,height=9cm,plotstyle/.style={line width=1.2pt,smooth,samples=100,domain=-0.05:1.01}}
\tikzset{mynode/.style={inner sep=2pt,fill,outer sep=0,circle}}
\tikzset{>=stealth}
\newcounter{saveenumi}
\newcommand{\PP}{\mathcal{P}}
\newcommand{\Q}{\mathbb{Q}}
\newcommand{\nn}{\mathfrak{n}}
\newcommand{\U}{\mathcal{U}}
\newcommand{\Z}{\mathbb{Z}}
\newcommand{\R}{\mathbb{R}}
\newcommand{\C}{\mathbb{C}}
\newcommand{\F}{\mathbb{F}}
\newcommand{\Hom}{{\rm Hom}}
\newcommand{\Ind}{{\rm Ind}}
\def\G{{\rm G}}
\def\SL{{\rm SL}}
\def\Sp{{\rm Sp}}
\def\U{{\rm U}}
\def\GL{{\rm GL}}
\def\PGL{{\rm PGL}}
\def\SO{{\rm SO}}
\def\Sp{{\rm Sp}}
\def\SS{{\rm S}}
\newtheorem{thm}{Theorem}[section]
\newtheorem{proposition}[thm]{Proposition}
\newtheorem{corollary}[thm]{Corollary}
\newtheorem{remark}[thm]{Remark}
\newtheorem{defn}[thm]{Definition}
\newtheorem{qn}[thm]{Question}
\newtheorem*{theorem*}{Theorem}
\newtheorem*{corollary*}{Corollary}
\newtheorem*{proposition*}{Proposition}
\newtheorem*{question*}{Question}
\newtheorem*{lemma*}{Lemma}
\newtheorem*{problem*}{Problem}
\newtheorem*{definition*}{Definition}
\DeclareTextAccent{\H}{T1}{5}
\begin{document}

\title[]{Dimension statistics of representations of finite groups}
\author{Arvind Ayyer}
\address{Arvind Ayyer, Department of Mathematics,
  Indian Institute of Science, Bangalore  560012, India.}
\email{arvind@iisc.ac.in}

\author{Dipendra Prasad}
\address{Dipendra Prasad, Department of Mathematics, 
IIT Bombay, Powai, Mumbai 400076, India}
\email{prasad.dipendra@gmail.com}

\date{\today}

\begin{abstract}
This paper discusses what the dimension data of irreducible
representations of a finite group looks like in some specific cases,
including unipotent and reductive groups over finite fields,
and how it compares with the size of conjugacy classes.

The first part of this paper deals with unipotent and reductive groups
over finite fields with $q$ elements in which either $q$ goes to infinity
or $\G=\GL_n(\F_q)$ and $n$ goes to infinity.  The second part of the paper deals with
the symmetric group $\SS_n$. 
The main conclusion that we want to bring out in
the  case of reductive groups  $\G(\F_q)$, $q$ varying,  is that 
the dimension data, resp.  the size of conjugacy classes, is in a statistical sense, ``roughly'' constant and the same (up to taking the squares). We introduce
the notion of {\it asympototically constant}, and  {\it asympototically log constant} to make precise
these notions, which we apply to various groups discussed in this paper including the symmetric groups $\SS_n$.

\end{abstract}

\keywords{dimension statistics, nilpotent groups, reductive groups, conjugacy classes, co-adjoint orbits, Kirillov theory, wavefront set, Gelfand-Graev representation, unipotent representation, Vershik-Kerov theory, asymptotic collinearity, partition function, Hardy-Ramanujan formula, Stirling's formula}
\subjclass[2020]{20C30, 20C33, 05A17}

\maketitle

\tableofcontents

\section{Introduction: wishful thinking}
This paper is about finite dimensional representations of finite
groups. Representations will always be understood over complex vector
spaces, thus, as conjugacy class of homomorphisms:
\[ 
\phi: \G \rightarrow \GL_n(\C),
\]
for some integer $n$.  One knows that, up to isomorphism, there are
only finitely many irreducible representations $\pi$ of a finite group
$\G$, equal to the number of conjugacy classes in $\G$, and if $d_\pi$
are their dimensions, then
\[ 
\sum_{\pi} d_\pi^2 = |\G|,
\] 
the sum being taken over the irreducible representations of $\G$.  On
the other hand,
\[
\sum_{g} |C_g| = |\G|,
\]
now the sum taken over the distinct conjugacy classes $C_g$ in $\G$
passing through $g \in \G$.

The paper is written under the wishful thinking that the two ways of
writing the cardinality of $\G$ are the same, not only after taking
the sum (of the same number of terms), but are the same, term-by-term,
in some cases.  Actual term-by-term equality may be hard to achieve,
so we make two allowances:

\begin{enumerate}
\item Instead of relating $\sum_{\pi} d_\pi^2 = |\G|,$ with the
  ``class equation'' $\sum_{g} |C_g| = |\G|$ of $\G$, we relate it to
  the class equation of any group $\G'$ acting on a space $X$ with
  $|X| = |\G|$, and writing out $X$ as the disjoint union of orbits
  $\G'\cdot x$ of the action of $\G'$ on $X$.

\item Instead of expecting $d_\pi^2=|\G' \cdot x|$, we demand an
  approximate equality.

\end{enumerate}

Both of these issues will get clarified as we discuss a few examples.

\subsection{Examples against}
There are any number of examples to illustrate that our wishful
thinking is ill-founded even in the simplest of examples.  Thus for
the symmetric group $\SS_3$, we have the two equalities written as
\[
6 = 1+1+4 = 1+2+3,
\]
and for the symmetric group $\SS_4$, we have the two equalities
written as
\[
24 = 1+1+4+9+9 = 1+6+8+6+3,
\]
and in either case, there is no way to recognize the sums giving 6 or
24 to be equal term-by-term.
          
As another example, if we take either the quaternionic group or the
dihedral group of order 8, which are nilpotent groups, to which we
will soon be turning our attention to, we have
\[
8 = 1+1+1+1+4 = 1 +1 + 2+2+2,
\]
which is also against our hope.
Thus, our wishful thinking is not true, certainly not for all groups
--- actually, at the moment, for no non-abelian group. 
The rest of the article thus discusses cases where it is true at least after some
modifications.

\subsection{Examples in favor}

For any finite abelian group, all conjugacy classes are of size $1$ and all irreducible representations are 1 dimensional, hence the equality of the two sizes
trivially holds.

For the group $\GL_2(\F_q)$, it is well known~\cite[Chapter 5]{fulton-harris-1991} that the dimension
of irreducible representations of $\GL_2(\F_q)$ are:
\begin{enumerate}
\item 1.

\item  $q$.

\item
  $(q+1)$.

\item $(q-1)$.

\end{enumerate}

Also, it is well-known  that the various conjugacy classes in $\GL_2(\F_q)$
have the following sizes:
\begin{enumerate}
\item  $1$.

\item $(q^2-1)$.

\item  $q(q+1)$. 

\item $(q^2-q)$.

\end{enumerate}

Thus, although $d_\pi^2$ is not quite the order of the corresponding
classes, they are closely related, both being polynomials in $q$ of degree 0
or degree 2; further, the multiplicity too with which these dimensions
occur is matched with the number of conjugacy classes of that cardinality.
  
An even more striking example arises for $\SL_2(\F_q)$, where besides
irreducible representations of dimension $1,q, (q+1), (q-1)$, for $q$ odd,
there
are also those of dimension
\begin{enumerate}
\item $(q+1)/2$, exactly two of them,

\item $(q-1)/2$, exactly two of them. 
\end{enumerate}

In this case if we look at conjugacy classes not in $\SL_2(\F_q)$, but
in $\G'= \PGL_2(\F_q)$, we notice that there is a unique conjugacy
class represented by the unique element of order 2 in $ (\F_q^\times
\times \F_q^\times)/\Delta(\F_q^\times)$, and by the unique element of
order 2 in $ (\F_{q^2}^\times)/(\F_q^\times)$, for which the
centralizer has an ``extra'' element of order 2 coming from the Weyl
group. The corresponding conjugacy classes in $\PGL_2(\F_q)$ are of
order
\begin{enumerate}
\item $q(q+1)/2$. 

\item $(q^2-q)/2$.
\end{enumerate}

Thus, in these cases, the polynomials in $q$ which appear in
$|d_\pi|^2$ and $|C_g|$, do not have the same leading term, but rather
$2|d_\pi|^2$ and $|C_g|$ have the same leading term, the presence of
the factor 2 will, for the more general case for $\SL_n(\F_q)$, be
related to the number of irreducible components an irreducible
representation of $\GL_n(\F_q)$ has when restricted to $\SL_n(\F_q)$.

\section{Finite nilpotent groups: Kirillov theory}
Let $N$ be a connected unipotent algebraic group over $\F_q$, with Lie algebra
$\nn$, a finite dimensional vector space over $\F_q$ with
$\dim N = \dim \nn$. The group $N(\F_q)$, a nilpotent group,
operates on $\nn$ by the adjoint
action, and also on $\nn^* = \Hom(\nn,\F_q)$, via the dual action, called the
co-adjoint action.

In what follows, we  will consider
only those unipotent groups $N$
for which
${\rm exp}: \nn
\rightarrow N(\F_q)$ and $\log: N(\F_q) \rightarrow \nn$ are defined
and are inverses of each other, and that the Baker--Campbell--Hausdorff formula
holds.
Under these conditions, the stabilizer in $N(\F_q)$ of any element in
$\nn$ or in $\nn^*$ are connected unipotent groups;

For the following theorem, see for instance 
\cite[Theorem 7.7]{srinivasan-1979}.

\begin{thm}[Kirillov]
  Fix a non-trivial additive character $\psi: \F_q\rightarrow \C^\times$.
  Then associated to a co-adjoint orbit represented by
  $\lambda: \nn \rightarrow \F_q$, there is an
irreducible representation $\pi_\lambda$ of $N(\F_q)$ obtained as an
induced representation,
\[
\Ind_{H_\lambda}^{N(\F_q)} (\lambda|_{H_\lambda}),
\]
where $H_\lambda \subset N(\F_q)$ containing the stabilizer
$N_\lambda$ of $\lambda$ in $N(\F_q)$. Thus, $H_\lambda$ is a subgroup
of $N(\F_q)$, on which $\lambda$ makes sense as a character, such
that,
\[
[N(\F_q): N_\lambda] = [N(\F_q): H_\lambda]^2.
\]
\end{thm}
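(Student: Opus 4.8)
The plan is to transport the whole problem to the Lie algebra $\nn$ via the maps $\exp$ and $\log$, whose existence and mutual inverseness are guaranteed by our standing hypotheses, and then to run Kirillov's orbit method. First I would attach to $\lambda \in \nn^*$ the alternating $\F_q$–bilinear form $B_\lambda(X,Y) = \lambda([X,Y])$ on $\nn$. Its radical, namely $\{X \in \nn : \lambda([X,Y]) = 0 \text{ for all } Y \in \nn\}$, is exactly the Lie algebra $\nn_\lambda$ of the stabilizer $N_\lambda$ of $\lambda$ under the coadjoint action, so $B_\lambda$ descends to a nondegenerate alternating form on the quotient $\nn / \nn_\lambda$. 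A nondegenerate alternating form is symplectic, which forces $\dim(\nn/\nn_\lambda)$ to be even, say $2m$.

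Next I would produce a polarization: a Lie subalgebra $\h_\lambda$ with $\nn_\lambda \subseteq \h_\lambda \subseteq \nn$ that is maximal isotropic for $B_\lambda$, so that $\h_\lambda / \nn_\lambda$ is a Lagrangian of the symplectic space $\nn / \nn_\lambda$ and hence has dimension $m$. In the characteristic range where $\exp$, $\log$ and Baker--Campbell--Hausdorff are available such subalgebras always exist; the cleanest route is Vergne's inductive construction, intersecting $\nn$ with the terms of a central flag so that the subalgebra property and isotropy are produced simultaneously. Setting $H_\lambda = \exp(\h_\lambda)$, I would then verify that $\psi \circ \lambda$ is a genuine character of $H_\lambda$: for $X, Y \in \h_\lambda$ the Baker--Campbell--Hausdorff series writes $\log(\exp X \cdot \exp Y)$ as $X + Y$ plus a sum of iterated brackets, every one of which lies in $[\h_\lambda, \h_\lambda]$. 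Since $\h_\lambda$ is an isotropic subalgebra, $\lambda$ annihilates all of these, whence $\psi(\lambda(\log(\exp X \cdot \exp Y))) = \psi(\lambda X)\,\psi(\lambda Y)$, as required.

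The numerical identity now drops out of the Lagrangian count: $[\nn : \h_\lambda] = q^m$ while $[\nn : \nn_\lambda] = q^{2m}$, and translating indices of subalgebras into indices of the corresponding unipotent subgroups gives $[N(\F_q) : N_\lambda] = [N(\F_q) : H_\lambda]^2$, with $[N(\F_q) : H_\lambda] = q^m = \dim \pi_\lambda$. The one genuinely substantive step is the irreducibility of $\pi_\lambda = \Ind_{H_\lambda}^{N(\F_q)}(\psi \circ \lambda|_{H_\lambda})$, and I would settle it with Mackey's criterion. By Mackey's intertwining-number formula, $\langle \pi_\lambda, \pi_\lambda \rangle$ equals a sum over $H_\lambda$–double cosets of the local inner products $\langle \psi \circ \lambda, (\psi \circ \lambda)^g \rangle$ taken over $H_\lambda \cap g H_\lambda g^{-1}$, so it suffices to show that for $g \notin H_\lambda$ the characters $\psi \circ \lambda$ and $(\psi \circ \lambda)^g$ disagree on the overlap, forcing every off-diagonal term to vanish and leaving $\langle \pi_\lambda, \pi_\lambda \rangle = 1$.

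The hard part, and where I expect to spend the most care, is exactly this Mackey computation. Writing $g = \exp Z$, one must show that if the two characters agree on $H_\lambda \cap g H_\lambda g^{-1}$, then $Z$ is $B_\lambda$–orthogonal to all of $\h_\lambda$; because $\h_\lambda/\nn_\lambda$ is Lagrangian and a Lagrangian is its own symplectic orthogonal, this forces $Z \in \h_\lambda$ and hence $g \in H_\lambda$, contradicting the choice of $g$. This is precisely where maximality of the isotropic subalgebra $\h_\lambda$ is indispensable, and the BCH bookkeeping needed to pass between the group-level agreement condition on $g$ and the Lie-algebra-level orthogonality condition on $Z$ is the one routine-looking but delicate piece I would verify in full detail.
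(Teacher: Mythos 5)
The paper itself offers no proof of this theorem: it is quoted from Srinivasan \cite[Theorem 7.7]{srinivasan-1979}, with only the post-statement remark explaining that $[N(\F_q):N_\lambda]=[N(\F_q):H_\lambda]^2$ reflects the fact that a maximal isotropic subspace of a nondegenerate symplectic space has half its dimension. So your proposal cannot be compared with an in-paper argument; measured against the standard literature, your outline is the classical orbit-method proof, and most of it is sound. The identification of the radical of $B_\lambda$ with $\nn_\lambda$, the existence of a polarization $\h_\lambda$ via Vergne's construction (which does contain the radical, so $H_\lambda\supseteq N_\lambda$ as the statement requires), the Baker--Campbell--Hausdorff verification that $\psi\circ\lambda\circ\log$ is a homomorphism on $H_\lambda=\exp(\h_\lambda)$ (using that $\h_\lambda$ is both a subalgebra and isotropic, so $\lambda$ kills $[\h_\lambda,\h_\lambda]$), and the index count $[N(\F_q):H_\lambda]=q^m$, $[N(\F_q):N_\lambda]=q^{2m}$ are all correct and complete.

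The one genuine gap is in the irreducibility step, and it is exactly where you say you would ``spend the most care,'' so let me be precise about why the argument as sketched does not close. Mackey's criterion only tests agreement of $\psi\circ\lambda$ and $(\psi\circ\lambda)^g$ on $H_\lambda\cap gH_\lambda g^{-1}$, i.e., on $\h_\lambda\cap\Ad(g)\h_\lambda$, which can be a proper (and small) subspace of $\h_\lambda$. From agreement there you obtain $B_\lambda(Z,X)+(\text{higher BCH terms})=0$ only for $X$ in that intersection, which is not enough to conclude that $Z$ is $B_\lambda$-orthogonal to all of $\h_\lambda$; the inference ``agreement $\Rightarrow Z\in\h_\lambda^{\perp}=\h_\lambda$'' therefore does not follow as stated, and the higher-order terms of $\Ad(\exp(-Z))X$ are a second, separate obstruction. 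The standard repairs are either (i) an induction on $\dim\nn$ through a minimal central ideal, reducing to a Heisenberg quotient where the Mackey computation is transparent, or (ii) bypassing Mackey entirely: compute the character of $\Ind_{H_\lambda}^{N(\F_q)}(\psi\circ\lambda)$ via the induced character formula, show it equals $q^{-m}\sum_{\mu\in\mathcal{O}_\lambda}\psi(\mu(X))$ at $\exp X$ (the finite Kirillov character formula), and then get $\langle\Theta_\lambda,\Theta_\lambda\rangle=1$ from orthogonality of additive characters of $\nn$. Either route completes your sketch; as written, the Mackey step is the one place where the proof is not yet a proof.
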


In more detail, under the symplectic pairing,
\[
B_\lambda(x,y) = \lambda([x,y]),
\]
the Lie algebra of $H_\lambda$ is a maximal isotropic subspace of
$\nn$, whereas the Lie algebra of $N_\lambda$ is the null space of
$B_\lambda$, consisting of elements $x$ of $\nn$ such that
\[
B_{\lambda}(x,y) =  \lambda([x,y]) = 0, \hspace {1cm} \forall y \in \nn.
\]
Thus, the equality, $[N(\F_q): N_\lambda] = [N(\F_q): H_\lambda]^2,$
is a reflection of the fact that on a non-degenerate symplectic vector
space, the dimension of a maximal isotropic subspace is half the
dimension of the vector space.

\begin{qn}
The theorem of Kirillov, recalled above, makes a precise relationship between
representations of unipotent groups and the co-adjoint orbit; in particular, it implies that
the dimension data $d_\pi^2$ and the order of co-adjoint orbits are the same. But it needs
restrictions  on the unipotent groups involved which would not be satisfied
say for the group of upper triangular matrices in $\GL_n(\F_q)$ if $n> p$, where $p$ is the characteristic of the field.

Is the cruder question relating the dimension data $d_\pi^2$ and the order of
co-adjoint orbits still valid in these omitted cases? 
\end{qn}

\section{A question on nilpotent groups}

In this section, we discuss certain situations where the original question 
relating the dimension data $d_\pi^2$  with
the order of conjugacy classes in $N= N(\F_q)$ holds, without the need for
the co-adjoint orbits.

Looking at one dimensional representations of $N$, we must then have 
the center of $N$, $Z(N)$, of the same cardinality as $N/[N,N]$ (sometimes called the co-center of $N$). This is 
already not such a simple condition to achieve among nilpotent
groups. Keeping this condition in mind, we can ask a more
refined condition that we introduce now.

Recall that the {\it lower central series} of a nilpotent group $N$ is defined by 
\[
N_{i+1}= [N,N_i], i \geq 0, {\rm ~~with~~} N_0= N.
\]
On the other hand, the {\it upper central series} of a nilpotent group $N$ is defined by 
\[
N^{i+1}= \{ n \in N | [n,m] \in N^i {\rm ~~for ~~all~~} m \in N\}, i \geq 0, {\rm ~~with~~} N^0= 1.
\]
﻿
\begin{defn}[Selfdual nilpotent group]
A nilpotent group $N$ of nilpotency 
class $d$ (the smallest integer $d$ such that $N_d=1$; it is also the smallest integer $d$ such that $N^d=N$) 
will be called a {\it selfdual nilpotent} group if
\begin{enumerate}

\item the lower central series and the upper central series are the same, i.e. 
\[
N_i = N^{d-i}, {\rm ~~for ~~all~~} i \geq 0,
\] 

\item  The successive quotients 
of the lower central series (and hence of   the upper central series) look the same from the top as from the bottom, i.e.,
\[
N_{i}/N_{i+1} \cong N_{d-i-1}/N_{d-i} {\rm ~~for ~~all~~} i \geq 0.
\]
   
\end{enumerate}

\end{defn}

\begin{remark}
   $N=N_1 \times N_2$ is a selfdual nilpotent group
if and only if  $N_1, N_2$ are. Thus in what follows, it suffices to consider only
indecomposable nilpotent groups $N$ which cannot be written as a
non-trivial product  $N_1 \times N_2$.
\end{remark}

The definition  above is made in the hope that for such selfdual nilpotent groups, 
the original question 
relating the dimension data $d_\pi^2$  with
the conjugacy classes in $N$ holds, as the following examples show, for which
we refer to \cite{james-1980}\footnote{
The authors thank Manoj Yadav for these examples and the reference.}.

\begin{enumerate}

\item There is a nilpotent group $N_5$ of order $p^5$ and of nilpotency class 3,
such that $Z(N_5) \cong N_5/[N_5,N_5] \cong Z/p \oplus \Z/p,$  
and for this $N_5$, all
non-central conjugacy classes are of size $p^2$ and any irreducible
representation is of dimension 1 or  $p$. So for this group,  the original question 
relating the dimension data $d_\pi^2$  with
the size of the conjugacy classes in $N_5$ holds.

\item There are groups $N$ of order $p^6$ and nilpotency class $2$, in which $Z(N)$ 
and $N/[N,N]$ are elementary abelian of order $p^3$, and therefore
isomorphic. Again all non-central conjugacy classes are of size $p^2$
and any irreducible representation is of dimension 1 or $p$.

\end{enumerate}

\begin{qn}
\begin{enumerate}
\item Are  there selfdual $p$-groups $N$
for which $N_{i+1}/N_i$ are arbitrarily assigned groups, keeping the
selfduality condition?

\item Is it true that  the dimension data $d_\pi^2$ is the same as the data of the size of 
the conjugacy classes in a selfdual $p$-group?
\end{enumerate}
\end{qn}

\begin{remark}
Hanaki-Okuyama ~\cite{Han-Oku} and  Riedl~\cite{Rie}  
have  constructed nice examples of selfdual nilpotent groups, and have also proved that their groups have the property that  the dimension data $d_\pi^2$ is the same as the data of the size of 
the conjugacy classes in them. The authors thank Silvio Dolfi for these references.  The nilpotent groups $P=P(q,e,n)$
used by Riedl~\cite{Rie} and Hanaki-Okuyama ~\cite{Han-Oku} are familiar groups: for the ring
$ R= \F_{q^e}\{X\}/(X^{n+1})$ where $\F_{q^e}$ is a finite field
 with $q^e$ elements, and  $\F_{q^e}\{X\}$ is the twisted polynomial algebra $\alpha X = X \alpha^q$, $P=1+J$,
where $J$ is the Jacobson radical of $R$. These nilpotent groups are closely related to groups occuring in
$p$-adic groups as $G[1]/G[n+1]$ where $G[i]$ is the natural filtration on $G=D^\times$ where $D$ is a
division algebra over a $p$-adic field of index $e$; the assertion on dimension of irreducible
representations of  $G[1]/G[n+1] $ needed in the work of Riedl and Hanaki-Okuyama amounts to saying that
this dimension depends only on the minimal level   $G[1]/G[i]$ of an irreducible  representation of  $G[1]/G[n+1]$.
  \end{remark}

\section{Finite  reductive groups }

We already saw in the introduction that our question holds
(in an approximate sense) for $\GL_2(\F_q)$.
In this section we discuss what happens more generally for finite reductive groups
of Lie type. 

Here is the precise theorem for which we refer to 
Kawanaka~\cite[Theorem 2.4.1.ii(b)]{kawanaka-1987}\footnote{
The authors thank Jiajun Ma for the precise reference.}.
For this theorem, 
we note  that the dimension of an
irreducible representation $\pi$ of $\G(\F_q)$,
where $\G$ is a reductive algebraic group over $\F_q$,
is expressible as a polynomial
in $q$ of degree $b_\pi$ (with leading coefficient $a_\pi$, which we will not need in this work).

\begin{thm}\label{Kawa}
For each irreducible representation $\pi$ of $\G(\F_q)$, there is a unipotent conjugacy class
$\gamma_\pi \in \G(\F_q)$, such that the dimension of 
the conjugacy class of $\gamma_\pi$ is $2 b_{\pi} $.
\end{thm}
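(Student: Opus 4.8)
The plan is to identify $\gamma_\pi$ with the \emph{wavefront set} of $\pi$ and to compute its dimension in two stages, matching the two factors in the dimension of $\pi$. First I would invoke Lusztig's Jordan decomposition of characters to write $\pi \leftrightarrow (s,\psi)$, where $s$ is a semisimple class in the dual group $\G^{\ast}(\F_q)$ and $\psi$ is a unipotent character of $H := C_{\G^{\ast}}(s)$. Degrees factor accordingly,
\[
d_\pi \;=\; [\G^{\ast}(\F_q):H(\F_q)]_{p'}\cdot d_\psi(q),
\]
so that $b_\pi = \deg_q [\G^{\ast}:H]_{p'} + \deg_q d_\psi$. Since $H$ and $\G^{\ast}$ share a maximal torus, the prime-to-$p$ index is a polynomial in $q$ of degree $|\Phi^+| - |\Phi_H^+|$ (the difference of the numbers of positive roots), and this is exactly half of $\dim\G^{\ast} - \dim H = 2(|\Phi^+|-|\Phi_H^+|)$, the dimension of the semisimple class of $s$. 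Thus the semisimple part of $b_\pi$ already accounts for half of the codimension drop from $\G^{\ast}$ to $H$.

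The heart of the argument is the unipotent case: to the unipotent character $\psi$ I must attach a unipotent class $\mathcal{O}_\psi$ in $H$ with
\[
\dim_H \mathcal{O}_\psi \;=\; 2\deg_q d_\psi .
\]
Here $\deg_q d_\psi$ is Lusztig's $A$-invariant, the top degree of the generic degree, and $\mathcal{O}_\psi$ is the wavefront set of $\psi$, the maximal unipotent class for which the associated generalized Gelfand--Graev representation of $H(\F_q)$ (in Kawanaka's sense) contains $\psi$. Establishing this identity is where the real content lies: it requires Lusztig's partition of unipotent characters into families, the Springer correspondence attaching special unipotent classes to families, and the explicit generic-degree formulas. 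For $\G=\GL_n$ one checks it directly, since the unipotent character indexed by a partition $\lambda$ has $\deg_q d_\psi = |\Phi^+| - n(\lambda')$, while the class of Jordan type $\lambda'$ has dimension $n^2 - \sum_i \lambda_i^2 = 2(|\Phi^+|-n(\lambda'))$, and the two sides agree. The general case is the main obstacle and is precisely the part one borrows from the structure theory.

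With $\mathcal{O}_\psi \subset H$ in hand, I would produce $\gamma_\pi$ by Lusztig--Spaltenstein induction of nilpotent orbits from the maximal-rank subgroup $H = C_{\G^{\ast}}(s)$ up to $\G^{\ast}$, using its defining property that it preserves codimension:
\[
\dim_{\G^{\ast}} \Ind_H^{\G^{\ast}}\mathcal{O}_\psi \;=\; \dim_H \mathcal{O}_\psi + (\dim\G^{\ast} - \dim H).
\]
Combining with the two computations above gives $\dim_{\G^{\ast}}\Ind_H^{\G^{\ast}}\mathcal{O}_\psi = 2\deg_q d_\psi + 2(|\Phi^+|-|\Phi_H^+|) = 2b_\pi$. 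Transporting this class to $\G$ through the natural correspondence of unipotent classes then yields the desired $\gamma_\pi$; since the theorem only asserts the existence of a unipotent class of dimension $2b_\pi$, this suffices.

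Three points need care. The main obstacle is the unipotent-character identity of the second paragraph, which encodes the dictionary between the top degree of a generic degree and the dimension of the corresponding special unipotent class. Secondly, $C_{\G^{\ast}}(s)$ is in general only a pseudo-Levi (a reductive subgroup of maximal rank, possibly disconnected), so one must use the extension of Lusztig--Spaltenstein induction to this setting, verify that the induced class is the wavefront set, and, for non-self-dual types, confirm that the transport from $\G^{\ast}$ back to $\G$ preserves the relevant dimension. Finally, as in Kawanaka's and Lusztig's theorems, the construction of generalized Gelfand--Graev representations and the well-definedness of the wavefront set require the characteristic $p$ to be good and $q$ large enough; the statement is to be read under these hypotheses.
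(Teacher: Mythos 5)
The paper does not prove this statement at all: it is quoted verbatim from Kawanaka \cite[Theorem 2.4.1.ii(b)]{kawanaka-1987} (with Lusztig's \cite{lusztig-1992} proof of the wavefront-set conjecture in the background), so there is no internal argument to compare yours against. Judged on its own terms, your outline is the standard architecture of the actual proof, and the parts you make explicit are correct: the degree count $\deg_q[\G^{*}:H]_{p'}=|\Phi^+|-|\Phi_H^+|=\tfrac12(\dim\G^{*}-\dim H)$ is right because $H=C_{\G^{*}}(s)$ has the same rank as $\G^{*}$; the codimension-preserving property of Lusztig--Spaltenstein induction is correctly invoked; and the $\GL_n$ verification checks out, since $n(\lambda')=\tfrac12(\sum_i\lambda_i^2-n)$ gives $2\bigl(\binom{n}{2}-n(\lambda')\bigr)=n^2-\sum_i\lambda_i^2$, the dimension of the class of Jordan type $\lambda'$.

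That said, you should be clear that what you have written is a reduction, not a proof. The identity $\dim_H\mathcal{O}_\psi=2\deg_q d_\psi$ for unipotent characters --- which you correctly flag as ``where the real content lies'' --- \emph{is} the theorem of Kawanaka--Lusztig in its essential case; everything else in your argument is bookkeeping around it. Two further points deserve more than a passing mention. First, the compatibility of the induced class with the actual wavefront set of $\pi$ (rather than merely some class of the right dimension) is a nontrivial theorem of Lusztig; you correctly note that the bare statement only needs existence, and for that your argument suffices. Second, when the center of $\G$ is not connected, $C_{\G^{*}}(s)$ can be disconnected and the Jordan decomposition needs the refinements due to Lusztig for disconnected centralizers; the degree count survives because the component group contributes a constant, not a power of $q$, but this should be said. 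With those caveats, your proposal is a faithful sketch of the proof the paper is citing, and it supplies more detail than the paper itself offers.
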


In this theorem, the unipotent conjugacy class $\gamma_\pi \in \G(\F_q)$ associated to a representation
$\pi$ is what is called
the {\it wavefront} set of $\pi$ which is the largest unipotent conjugacy class for which
the representation has a {\it Generalized Gelfand-Graev} model, for which we refer to \cite{kawanaka-1987},
and \cite{lusztig-1992}. Theorem \ref{Kawa} is usually stated for $p$ large enough (compared to $\G$,
in particular for the exponential and log to make sense).

In a different vein, the following theorem of Lusztig,
called the Jordan decomposition of representations,
analogous to the Jordan decomposition of elements in an algebraic group,
reduces questions about dimension of irreducible representations to dimension
of what are called
{\it unipotent representations}.

\begin{thm}(Lusztig)
  Any irreducible representation $\pi$ of $\G(\F_q)$,  gives rise to a semi-simple element
  $s \in \G^\vee(\F_q)$, for $\G^\vee$, the dual group,
  and a unipotent representation $\pi_u$ of  $\G^\vee(s)$ such that,
  \[ \dim \pi = \dim \pi_u \cdot \frac{ |\G(\F_q)|_{p'}}{|\G^\vee(s)|_{p'}}. \]

  More precisely,  any irreducible representation $\pi$ of $\G(\F_q)$, is obtained via Lusztig induction from $\G^\vee(s)$ (assuming it is connected, and is a
  ``twisted'' Levi subgroup) of the unipotent representation $\pi_u$ of  $\G^\vee(s)$ multiplied by a character of  $\G^\vee(s)$.

\end{thm}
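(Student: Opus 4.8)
The plan is to deduce the statement from Deligne--Lusztig theory together with Lusztig's classification of the irreducible characters of $\G(\F_q)$. First I would fix an auxiliary prime $\ell \neq p$ and, for each $F$-stable maximal torus $\T$ of $\G$ and each character $\theta$ of $\T(\F_q)$, recall the virtual representation $R_{\T}^{\theta}$ built from the $\ell$-adic cohomology of the associated Deligne--Lusztig variety. The duality $\G \leftrightarrow \G^\vee$ matches $\G(\F_q)$-conjugacy classes of pairs $(\T,\theta)$ with $\G^\vee(\F_q)$-conjugacy classes of pairs $(\T^\vee, s)$, where $s \in \T^\vee(\F_q)$ is semisimple. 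Declaring $\pi$ to lie in the \emph{Lusztig series} $\mathcal{E}(\G(\F_q), s)$ precisely when $\pi$ occurs in some $R_{\T}^{\theta}$ with $(\T,\theta)$ dual to $(\T^\vee, s)$ partitions $\Irr(\G(\F_q))$ over the semisimple classes $[s]$ in $\G^\vee(\F_q)$; this assignment $\pi \mapsto s$ produces the required semisimple element.

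Next I would identify each series $\mathcal{E}(\G(\F_q), s)$ with the unipotent representations of the centralizer $\G^\vee(s)$. When $s = 1$ the series $\mathcal{E}(\G(\F_q), 1)$ is the set of unipotent representations by definition. For general $s$, the Jordan decomposition of characters furnishes a bijection $\mathcal{E}(\G(\F_q), s) \leftrightarrow \mathcal{E}(\G^\vee(s)(\F_q), 1)$, written $\pi \mapsto \pi_u$, which under the connectedness hypothesis on $\G^\vee(s)$ is realized by Lusztig induction $R_{\G^\vee(s)}^{\G}$ from the twisted Levi $\G^\vee(s)$, tensored by the linear character $\hat{s}$ of $\G^\vee(s)(\F_q)$ determined by $s$. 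This is exactly the second, more precise assertion.

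The dimension formula then follows from the degrees of the Deligne--Lusztig characters. The crucial input is that $R_{\T}^{\theta}(1) = \epsilon_{\G}\,\epsilon_{\T}\,|\G(\F_q)|_{p'}/|\T(\F_q)|$ is independent of $\theta$, combined with the compatibility of Lusztig induction with degrees: twisted induction from $\G^\vee(s)$ multiplies $\dim \pi_u$ by the index $|\G(\F_q)|_{p'}/|\G^\vee(s)(\F_q)|_{p'}$, up to the sign $\epsilon_{\G}\,\epsilon_{\G^\vee(s)}$, which is absorbed since $\dim \pi > 0$ and a group and its dual have equal order over $\F_q$. Because tensoring by the linear character $\hat{s}$ leaves the degree unchanged, one obtains $\dim \pi = \dim \pi_u \cdot |\G(\F_q)|_{p'}/|\G^\vee(s)(\F_q)|_{p'}$, as claimed.

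The hard part is establishing the Jordan decomposition bijection itself and its compatibility with twisted induction. This rests on Lusztig's full classification of the unipotent characters through families, the (generalized) Springer correspondence, and the theory of character sheaves, and in the original treatment it requires a delicate case-by-case verification for the exceptional groups, together with control over the cuspidal constituents that the $R_{\T}^{\theta}$ do not detect through their uniform projections alone. Once the bijection is known to intertwine degrees with the stated index factor, the formula is immediate; essentially all the depth lies in constructing that bijection and proving that it respects Lusztig induction, which is where I would expect the entire difficulty of the argument to concentrate.
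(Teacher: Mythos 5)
The paper offers no proof of this statement: it is quoted as a known theorem of Lusztig (the Jordan decomposition of characters), so there is nothing internal to compare your argument against. As a roadmap of Lusztig's own proof, your sketch is accurate: the partition of $\Irr(\G(\F_q))$ into Lusztig series $\mathcal{E}(\G(\F_q),s)$ indexed by semisimple classes $[s]$ in $\G^\vee(\F_q)$, the Jordan decomposition bijection $\mathcal{E}(\G(\F_q),s)\leftrightarrow\mathcal{E}(\G^\vee(s)(\F_q),1)$, and the degree formula coming from $R_\T^\theta(1)=\epsilon_\G\epsilon_\T|\G(\F_q)|_{p'}/|\T(\F_q)|$ together with $|\G(\F_q)|=|\G^\vee(\F_q)|$ are exactly the right ingredients, and you are right that essentially all of the difficulty is concentrated in constructing the bijection and verifying its compatibility with twisted induction (connected center versus disconnected center, cuspidal constituents invisible to uniform projections, case analysis for exceptional types). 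Two small cautions: first, $\G^\vee(s)$ is a subgroup of the \emph{dual} group, so ``Lusztig induction from $\G^\vee(s)$ to $\G$'' is an abuse of language (one must first identify $\G^\vee(s)$, when it is a Levi of a parabolic of $\G^\vee$, with the dual of a twisted Levi subgroup of $\G$ and induct from there); the paper's own formulation commits the same abuse, so this is not a defect relative to the source. Second, your sketch is a reduction to Lusztig's classification rather than a proof, but for a result of this depth that is the only reasonable thing to write, and it matches how the paper itself treats the statement.
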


\section{Asymptotically collinear}

Much of the present work is motivated by comparing the dimensions $\{d_\pi\}$ of irreducible representations
$\pi$ of a finite group $\G$ with the size of the conjugacy classes
in $\G$. Dimension data gives rise to a vector $d_{\G}= (d_1,d_2,\cdots, d_n) \in \R^n$ where we assume that
$d_i\leq d_{i+1}$ for all $i \leq (n-1)$, where $n$ is the number of conjugacy classes in $\G$. Similarly,
size of conjugacy classes in $\G$ gives rise to a vector
$c_{\G}= (\sqrt{c_1},\sqrt{c_2},\cdots, \sqrt{c_n}) \in \R^n$ where $c_i\leq c_{i+1}$ for all $i \leq (n-1)$. This work compares the
two vectors $d_{\G}, c_{\G} \in \R^n$ which are both of the same norm equal to $\sqrt{|\G|}$. Ideally, we may have liked
these two vectors to be the same, but that is rather rare, as we know. So, we try to make an approximation,
 hoping that these two vectors of the same length have a very small angle between them, a notion which
 can be used for the various groups being discussed in this paper.

 We actually calculate
the angle between
the vector $d_{\G}= (d_1,d_2,\cdots, d_n) \in \R^n$ and the ``constant vector'' ${\bf 1}= (1,1,\dots, 1) \in \R^n$,
and find that this angle goes to zero 
as the size $|\G|$ goes to infinity for the many cases considered in this paper. Similarly, the angle between $c_{\G}$ and
${\bf 1}$ goes to zero. Thus, we can conclude that the angle between  $d_{\G}, c_{\G} \in \R^n$ goes to zero, which we take
as indicative of approximate equality of  $d_{\G}$ and $ c_{\G}$.

\begin{defn}(Asymptotically collinear)
  Suppose $a^{(n)}  = (a_{1}^{(n)}, \dots,  \allowbreak a_{\ell_n}^{(n)}) $ and
  $b^{(n)} = (b_1^{(n)}, \dots, b_{\ell_n}^{(n)}) $ are two $\ell_n$-tuple  of
  numbers of the same size $\ell_n$,
  such that $\ell_n \to \infty$ as $n \to \infty$.
We say that $a^{(n)}$ and $b^{(n)}$ are \textit{asymptotically collinear} if
the angle between the vectors tends to $0$, i.e.
\begin{equation}
\label{asy-col}
\frac{(a^{(n)}, b^{(n)})}{\| a^{(n)}\| \|b^{(n)} \|} \to  1
\end{equation}
as $n \to \infty$.
\end{defn}

Here is a case where it is not important how one  orders $\{d_\pi\}$  and  $\{c_\lambda\}$: it is the case
when  $\{c_\lambda\}$ is identically constant, in which case, we have the notion of an {\it asymptotically constant} data, and this
is the only notion used in this work.

\begin{defn}(Asymptotically constant)
  A sequence $a^{(n)}$ of vectors of  size $\ell_n$, such that $\ell_n \to \infty$ as $n \to \infty$ is said to be \textit{asymptotically constant} if
  it is asymptotically collinear with the constant vector ${\bf 1}= (1,1,\dots)$  of  size $\ell_n$, i.e.,
\begin{equation}
\label{asy-const}
\frac{(a^{(n)}, {\bf 1})}{\| a^{(n)}\| \| {\bf 1} \|} \to  1
\end{equation}
as $n \to \infty$.
\end{defn}

We make one more definition.

\begin{defn}(Asymptotically log constant)
  A sequence $a^{(n)}$ of vectors of  size $\ell_n$, such that $\ell_n \to \infty$ as $n \to \infty$ is said to be
  \textit{asymptotically log constant} if
  \begin{equation}
\label{asy-log-const}
\frac{\ln (a^{(n)}, {\bf 1})}{\ln(\| a^{(n)}\| \| {\bf 1} \|)} \to  1
\end{equation}
as $n \to \infty$.
\end{defn}

\begin{remark}(a)
  It is easy to see that if $a^{(n)}$ is made from real numbers $\geq 1$,
  as will be the case in our applications, asymptotically
  constant implies asymptotically log constant, though the converse is not true. Being asymptotically
  log constant is an easier condition to check, which is the reason to have introduced it here.

  (b) There is another notion of collinearity one could define
  for the  $\ell_n$-tuple  of
  numbers of the same size $\ell_n$,
  $a^{(n)}  = (a_{1}^{(n)}, \dots,  \allowbreak a_{\ell_n}^{(n)}) $ and
  $b^{(n)} = (b_1^{(n)}, \dots, b_{\ell_n}^{(n)}) $, by replacing
  $a_i^{(n)}, b_i^{(n)}$  by $\log a_i^{(n)}, \log b_i^{(n)}$ assuming that
  $a_i^{(n)} , b_i^{(n)} $ are real numbers $\geq 1$, as
  is the case in this paper. But not knowing any useful theorems
  on log of dimensions of irreducible reprepresentations or log of sizes of conjugacy classes, we have not been able to use this notion.
  
  \end{remark}

\section{Reductive groups, $\G(\F_q)$, varying $q$}

As noted for the case of $\GL_2(\F_q)$, the dimension of most of the irreducible
representations of $\GL_2(\F_q)$ is approximately
$q$. In this section, we prove more generally that ``most'' irreducible representations of $\G(\F_q)$,
where $\G$ is a connected reductive algebraic group over $\F_p$, and $q=p^n$ goes to infinity,
have the same dimension for any given $q$, which can then be seen to be roughly $q^{d}$ which is the order of its $p$-Sylow subgroup.
More precisely, we prove in the next theorem that
the dimension data $\{d_\pi\}$ associated to $\G(\F_q)$ is asymptotically constant as $q$ goes
to infinity.

\begin{thm}
  Let $\G$ be a connected reductive algebraic group over $\F_p$, and $q=p^n$.
  Then as $n$ tends to infinity, the dimension of irreducible representations $\{d_\pi\}$ of $\G(\F_q)$
  becomes  asymptotically constant, i.e.,
\[  \lim_{n\to \infty} \frac{(d_\pi, {\bf 1})}{\| d_{\pi}\| \| {\bf 1} \|} =  1.\]
  \end{thm}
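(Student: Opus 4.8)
The plan is to convert the cosine in \eqref{asy-const} into three polynomial asymptotics in $q$ and to check that they match to leading order. Write $\ell = \ell(q) = |\Irr(\G(\F_q))|$ for the number of irreducible representations (equivalently, the number of conjugacy classes) and $S_1(q) = \sum_\pi d_\pi$. Using $\|{\bf 1}\|^2 = \ell$ and the orthogonality relation $\sum_\pi d_\pi^2 = |\G(\F_q)|$, the cosine becomes
\[
\frac{(d_\pi,{\bf 1})}{\|d_\pi\|\,\|{\bf 1}\|} = \frac{S_1(q)}{\sqrt{|\G(\F_q)|}\,\sqrt{\ell(q)}}.
\]
By Cauchy--Schwarz this is at most $1$ for every $q$, so it suffices to establish a matching lower bound as $q\to\infty$. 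Let $r = \rank \G$ and let $N$ be the number of positive roots, so that the $p$-Sylow subgroup has order $q^N$ and $\dim \G = 2N+r$. I will show $|\G(\F_q)| = q^{2N+r}(1+o(1))$, $\ell(q) = q^{r}(1+o(1))$, and $S_1(q) = q^{N+r}(1+o(1))$, each with leading coefficient $1$; substituting then gives $q^{N+r}/(q^{N+r/2}\cdot q^{r/2}) = 1$ to leading order, and the limit follows.

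The first two inputs are standard. The order $|\G(\F_q)| = q^{N}\prod_i(q^{d_i}-1)$ is a monic polynomial of degree $N + \sum_i d_i = 2N+r$. For $\ell(q)$ the leading contribution comes from regular semisimple classes: each meets a unique $\G(\F_q)$-class of maximal tori $T_w$ (indexed by $F$-conjugacy classes $[w]$ in $W$), and inside $T_w(\F_q)$ two regular elements are conjugate exactly under the relative Weyl group. Since all but $O(q^{r-1})$ of the $|T_w(\F_q)|\sim q^r$ elements are regular, the count is $\sum_{[w]} q^{r}/|C_{W,F}(w)|\,(1+o(1))$, and the Burnside-type identity $\sum_{[w]} 1/|C_{W,F}(w)| = 1$ yields $\ell(q) = q^r(1+o(1))$ with leading coefficient $1$.

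The crux is the estimate for $S_1(q)$, where I expect the real work. The plan is to use Lusztig's Jordan decomposition (the theorem recalled above): each $\pi$ corresponds to a semisimple class $s \in \G^\vee(\F_q)$ and a unipotent representation $\pi_u$ of $\G^\vee(s)$, with $\dim\pi = \dim\pi_u \cdot |\G(\F_q)|_{p'}/|\G^\vee(s)|_{p'}$. In the generic case $s$ is regular semisimple, so $\G^\vee(s)$ is a maximal torus $T^\vee$; a torus carries only the trivial unipotent representation, hence $\dim\pi_u = 1$ and
\[
\dim\pi = \frac{|\G(\F_q)|_{p'}}{|T^\vee(\F_q)|} = \frac{\prod_i(q^{d_i}-1)}{|T^\vee(\F_q)|} = q^{N}\bigl(1+O(q^{-1})\bigr),
\]
uniformly over the finitely many torus types. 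The regular semisimple classes of $\G^\vee(\F_q)$ number $q^r(1+o(1))$ with leading coefficient $1$, by the same torus-and-Burnside count applied in the dual group (which has the same rank, Weyl group, and order polynomial). Thus the generic $\pi$ are asymptotically all of $\Irr(\G(\F_q))$, so the non-generic $\pi$ number $\ell(q) - q^r(1+o(1)) = o(q^r)$; since every dimension is a polynomial of degree $b_\pi \le N$ and hence $O(q^N)$ uniformly, they contribute only $o(q^r)\cdot O(q^N) = o(q^{N+r})$. Summing the two ranges gives $S_1(q) = q^r(1+o(1))\cdot q^N(1+o(1)) + o(q^{N+r}) = q^{N+r}(1+o(1))$, and the three estimates combine to force the cosine to $1$.

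The main obstacle is the bookkeeping in this last step: one must use that Jordan decomposition is a genuine bijection onto $\Irr(\G(\F_q))$, that the count of regular semisimple classes in $\G^\vee(\F_q)$ really has leading term $q^r$, and that the uniform bound $b_\pi \le N$ holds across all Lusztig families so that the non-generic part is truly $o(q^{N+r})$. One should also handle the possible disconnectedness of $\G^\vee(s)$ for non-regular $s$ (these classes fall within the negligible $o(q^r)$ count, or are controlled under a connected-center hypothesis) and carry the Frobenius twist through the Weyl-group counts for non-split $\G$ by replacing ordinary centralizers with $F$-twisted ones. Finally one should confirm that the three functions are regular enough---polynomials, or ratios with controlled error---so that agreement of leading terms forces the ratio of the full expressions to tend to $1$. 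With these points secured, the dimension data of $\G(\F_q)$ is asymptotically constant as $q\to\infty$.
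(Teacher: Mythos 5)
Your proposal is correct in outline and shares the paper's overall frame (rewrite the cosine as $\sum_\pi d_\pi / \sqrt{|\G(\F_q)|\,|C(\F_q)|}$, use Cauchy--Schwarz for the upper bound, and use the Fulman--Guralnick-type count $|C(\F_q)|\sim q^r$), but it diverges at the one step that carries all the weight: the lower bound on $\sum_\pi d_\pi$. The paper gets this in a single line from the Gelfand--Graev representation: since $\Ind_{N(\F_q)}^{\G(\F_q)}\psi$ is multiplicity-free, its dimension $|\G(\F_q)|/|N(\F_q)| \sim q^{N+r}$ is a lower bound for $\sum_\pi d_\pi$, and nothing more is needed because Cauchy--Schwarz supplies the matching upper bound automatically. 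You instead compute $\sum_\pi d_\pi$ asymptotically via Lusztig's Jordan decomposition, isolating the representations with regular semisimple dual parameter (each of dimension $\sim q^N$, numbering $\sim q^r$) and bounding the rest by $o(q^r)\cdot O(q^N)$. Your route buys more --- an actual asymptotic $\sum_\pi d_\pi = q^{N+r}(1+o(1))$ together with an identification of which representations dominate, which is essentially the content of the paper's subsequent corollary --- but at the cost of substantially heavier inputs: bijectivity of Jordan decomposition, the uniform bound $b_\pi\le N$, and the centralizer bookkeeping you flag. One small correction to that bookkeeping: disconnected centralizers $\G^\vee(s)$ occur not only for non-regular $s$ but also for regular semisimple $s$ when the center of $\G$ is disconnected (e.g.\ the order-two elements of the tori of $\PGL_2$ for $\G=\SL_2$); these $s$ lie in a positive-codimension locus, so they contribute $O(q^{r-1})$ classes and are absorbed into your $o(q^{N+r})$ error term, but they should be named explicitly. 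With that noted, your argument is sound, just a longer road to the same limit.
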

\begin{proof}
  As \[ \frac{(d_\pi, {\bf 1})}{\| d_{\pi}\| \| {\bf 1} \|} = \frac{\sum d_\pi}{ \sqrt{|\G(\F_q)|} \cdot \sqrt{{|C(\F_q)|}}},\]
  where $C(\F_q)$ denotes the set of conjugacy classes in $\G(\F_q)$, we need to estimate $ {|C(\F_q)|}$,
  the cardinality of $C(\F_q)$, and also the sum $\sum d_\pi$.

  Evaluating the sum $\sum d_\pi$ over all $\pi$ might be a difficult question for a general
  reductive group. However, as the Gelfand-Graev representation:
  \[ {\rm Ind}_{N(\F_q)}^{\G(\F_q)} \psi,\]
  where $\psi: N(\F_q) \rightarrow \C^\times$ is a non-degenerate character, contains representations of
  $\G(\F_q)$ with multiplicity at most 1 (with those which appear,
  called {\it generic representations}), we find that,
  \[ \sum d_\pi \geq \frac{|\G(\F_q)|}{|N(\F_q)|}.\]
  Therefore,
  
  \begin{equation}
    \label{GG}
    \frac{(d_\pi, {\bf 1})}{\| \{d_{\pi}\} \| \| {\bf 1} \|} = \frac{\sum d_\pi}{ \sqrt{|\G(\F_q)|} \cdot \sqrt{|C(\F_q)|}} \geq
    \frac{|\G(\F_q)| /|N(\F_q)|}{ \sqrt{|\G(\F_q)|} \cdot \sqrt{|C(\F_q)|} }.
    \end{equation}

  By \cite[Theorem 1.1(2)]{fulman-guralnik-2012}, we have
  \begin{equation}
    \label{GG1}
    \lim_{q\to \infty} \frac {|C(\F_q)|}{q^r} = 1,
  \end{equation}
  where $r$ is the rank of the group $\G$, i.e., the dimension of its maximal tori.

  Further, as is well-known, for any reductive group $\G$, 
    \begin{equation}
      \label{GG2}
      \lim_{q\to \infty} \frac {|\G(\F_q)|}{q^{\dim \G}} = 1.
      \end{equation}
  
  Using \ref{GG1} and \ref{GG2} in the inequality \ref{GG}, it follows that,
  \[ \liminf_{q \to \infty}  \frac{(d_\pi, {\bf 1})}{\| \{d_{\pi}\} \| \| {\bf 1} \|} \geq 1.\]
  On the other hand, by the Cauchy-Schwarz inequality,
  \[ \limsup_{q \to \infty}  \frac{(d_\pi, {\bf 1})}{\| \{d_{\pi}\} \| \| {\bf 1} \|} \leq 1.\]
  This proves our Theorem.  \end{proof}

The following corollary follows from the proof of the above theorem. It is likely that this
corollary is well-known to the experts in the subject though we could not find a reference.

\begin{corollary} Generic representations of $\G(\F_q)$ make the
dominant contribution to the sum of dimensions $d_\pi$  of irreducible representations of $\G(\F_q)$. More precisely:
  \[\lim_{q \to \infty} \frac{\sum d_\pi}{|(\G/N)(\F_q)|} = 1.\]
  \end{corollary}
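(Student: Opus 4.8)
The plan is to combine the asymptotic equality established in the proof of the Theorem with the elementary structure theory of reductive groups. The key observation is that each generic representation occurs in the Gelfand-Graev representation $\Ind_{N(\F_q)}^{\G(\F_q)}\psi$ with multiplicity exactly one, so that the sum of dimensions of the generic representations equals the dimension of the Gelfand-Graev representation:
\[
\sum_{\pi \text{ generic}} d_\pi \;=\; \dim \Ind_{N(\F_q)}^{\G(\F_q)}\psi \;=\; |(\G/N)(\F_q)|.
\]
Thus the content of the corollary is that the total sum $\sum_\pi d_\pi$ over all irreducible representations is asymptotic to this generic contribution, i.e. the non-generic representations are negligible in the ratio as $q \to \infty$.

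First I would recall from the proof of the Theorem that the cosine in $(\ref{asy-const})$ tending to $1$ is, after using $(d_\pi,\mathbf{1}) = \sum d_\pi$, $\|d_\pi\| = \sqrt{|\G(\F_q)|}$ and $\|\mathbf{1}\| = \sqrt{|C(\F_q)|}$, precisely the statement that the ratio of $\sum_\pi d_\pi$ to $\sqrt{|\G(\F_q)|}\cdot\sqrt{|C(\F_q)|}$ tends to $1$. Then I would substitute the three asymptotics already invoked there, namely $|\G(\F_q)| \sim q^{\dim \G}$, $|C(\F_q)| \sim q^{r}$ with $r$ the rank, and $|N(\F_q)| \sim q^{\dim N}$. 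The only extra input is the standard dimension count $\dim \G = r + 2\dim N$, expressing that $\dim \G$ is the rank plus the number of roots while $N$, the unipotent radical of a Borel, accounts for the positive roots; equivalently $\dim N = (\dim \G - r)/2$. A direct substitution then gives
\[
\frac{\sqrt{|\G(\F_q)|}\cdot\sqrt{|C(\F_q)|}}{|(\G/N)(\F_q)|}
= \frac{\sqrt{|C(\F_q)|}\cdot|N(\F_q)|}{\sqrt{|\G(\F_q)|}}
\sim \frac{q^{r/2}\cdot q^{\dim N}}{q^{(\dim \G)/2}}
= q^{\,r/2 + \dim N - (\dim \G)/2} = 1,
\]
the last equality being exactly the dimension identity above. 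Combining this with the asymptotic recalled from the Theorem yields $\sum_\pi d_\pi \sim |(\G/N)(\F_q)|$, which is the assertion of the corollary.

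The only genuinely non-formal ingredient is the dimension count $\dim \G = r + 2\dim N$, but this is immediate from the root space decomposition of the Lie algebra of $\G$; everything else is a rearrangement of estimates already assembled in the proof of the Theorem. I expect no real obstacle here: the positivity of all $d_\pi$ together with the generic contribution being \emph{exactly} $|(\G/N)(\F_q)|$ already forces $\sum_\pi d_\pi \geq |(\G/N)(\F_q)|$, so the computation above is needed only for the matching upper bound, and the two together squeeze the ratio to $1$.
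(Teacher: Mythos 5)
Your argument is correct and is exactly the one the paper intends: the paper merely remarks that the corollary "follows from the proof of the above theorem," and that proof consists precisely of the lower bound $\sum d_\pi \geq |\G(\F_q)|/|N(\F_q)|$ from the multiplicity-one property of the Gelfand--Graev representation together with the asymptotics $|C(\F_q)| \sim q^r$, $|\G(\F_q)| \sim q^{\dim \G}$ and the identity $\dim \G = r + 2\dim N$, which show $\sqrt{|\G(\F_q)|}\,\sqrt{|C(\F_q)|} \sim |(\G/N)(\F_q)|$. Your write-up simply makes explicit the squeeze between this lower bound and the Cauchy--Schwarz upper bound already established in the theorem, so it matches the paper's route.
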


\section{Reductive group $\GL_n(\F_q)$, varying $n$}
In this section we consider the collection of groups $\GL_n(\F_q)$, in which we fix $q$, but vary $n$. In a certain sense,
this collection of groups is like ``fattened'' symmetric group $\SS_n$, and one may hope that phenomenon seen for   the collection of groups $\GL_n(\F_q)$, in which we fix $q$, but vary $n$ have something in common with $\SS_n$ in which $n$ varies.

For the calculations in this section, we need the sum of dimensions of representations of $\GL_n(\F_q)$ as well as the number of irreducible representations of $\GL_n(\F_q)$, both of which are available in the literature, and which we next recall.

\begin{thm}[Gow~\cite{gow-1983}] \label{Gow}
Let $q$ be a power of a prime. The sum of the degrees of the irreducible
characters of $\G = \GL_n (\F_q)$ equals the number of symmetric matrices in $\G$. If $n = 2m+ 1,$ this sum is
\[ q^{m^2 +  m}(q^{2m+1}-1)(q^{2m-1}-1) \cdots (q-1).\]
If $n = 2m,$ this sum is
\[ q^{m^2 +  m}(q^{2m-1}-1)(q^{2m-3}-1) \cdots (q-1).\]
\end{thm}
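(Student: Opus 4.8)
The plan is to prove Gow's theorem in two logically independent pieces: first the exact identity that $\sum_\pi d_\pi$ equals the number of symmetric matrices in $\G = \GL_n(\F_q)$, and then the purely combinatorial evaluation of that number as the stated product. The first piece carries all of the representation-theoretic content and rests on the transpose-inverse involution, while the second is an orbit-counting exercise.

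For the identity, I would first introduce the map $\sigma\colon g \mapsto {}^t g^{-1}$ on $\G$. One checks directly that $\sigma$ is an automorphism with $\sigma^2 = \mathrm{id}$, and that $g\,\sigma(g) = 1$ if and only if $g = {}^t g$; hence the set of symmetric matrices in $\G$ is exactly $\{g : g\,\sigma(g)=1\}$. Next I would invoke twisted Frobenius-Schur theory for the involutive automorphism $\sigma$: for each irreducible character $\chi$ one forms the twisted indicator $\varepsilon_\sigma(\chi) = |\G|^{-1}\sum_{g}\chi(g\,\sigma(g))$, and the twisted Frobenius-Schur theorem gives $\sum_\chi \varepsilon_\sigma(\chi)\,\chi(1) = \#\{g : g\,\sigma(g)=1\}$, with $\varepsilon_\sigma(\chi)\in\{0,\pm 1\}$ and $\varepsilon_\sigma(\chi)\neq 0$ precisely when $\chi\circ\sigma = \bar\chi$. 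Since every matrix is conjugate to its transpose, $\chi(\sigma(g)) = \chi(({}^tg)^{-1}) = \chi(g^{-1}) = \overline{\chi(g)}$ for every irreducible $\chi$, so the nonvanishing condition holds for all $\chi$ and each indicator is $\pm 1$. (Specializing $\sigma = \mathrm{id}$ recovers ordinary Frobenius-Schur, where the same condition reduces to $\chi$ being real; this is the point: the transpose-inverse twist ``sees'' every irreducible, not merely the real ones.)

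The crux -- and the step I expect to be the main obstacle -- is to show that in fact $\varepsilon_\sigma(\chi) = +1$ for every irreducible character of $\GL_n(\F_q)$, i.e.\ that each irreducible carries a $\sigma$-twisted invariant \emph{symmetric} rather than alternating form. Granting this, the identity $\sum_\pi d_\pi = \#\{\text{symmetric } g \in \G\}$ is immediate. To establish the positivity I would use Green's parametrization of $\Irr(\GL_n(\F_q))$ and track the action of transpose-inverse on the parametrizing data, reducing the claim to the construction of an explicit orthogonal model for each irreducible; equivalently, one exhibits $\sum_\chi\chi$ as a multiplicity-free ``model'' representation on which $\sigma$ acts orthogonally. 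This is where all the representation-theoretic weight of the theorem sits, and it does not follow from soft arguments, since the ordinary Frobenius-Schur indicators of $\GL_n(\F_q)$ are not all $+1$.

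Finally I would evaluate $\#\{\text{symmetric } g\in\G\}$ by orbit-counting, taking $q$ odd as the main case. The group acts on nondegenerate symmetric matrices by the congruence action $g\cdot A = g A\,{}^t g$; the orbits are the equivalence classes of nondegenerate symmetric bilinear forms, of which there are two (distinguished by discriminant), and the stabilizer of a form is its orthogonal group, so the count is $\sum |\GL_n(\F_q)|/|\O(A)|$ over the two classes. Substituting the standard orders of $\GL_n(\F_q)$ and of $\O_{2m+1}(\F_q)$ when $n=2m+1$, or of $\O^{\pm}_{2m}(\F_q)$ when $n=2m$, the two-class sum collapses after elementary simplification -- for $n=2m$ the two terms combine through $\tfrac{1}{q^m-1}+\tfrac{1}{q^m+1} = \tfrac{2q^m}{q^{2m}-1}$ -- and one recovers $q^{m^2+m}$ times the product of $(q^i-1)$ over the odd indices $i$, matching the two displayed formulas. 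The remaining case $q$ even requires a separate characteristic-two count of invertible symmetric matrices, which can be handled by a direct recursion on $n$.
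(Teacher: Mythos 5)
The paper does not prove this statement at all: it is quoted verbatim from Gow's 1983 paper and used as a black box, so there is no internal argument to compare yours against. Judged on its own, your outline follows what is essentially the standard (and indeed Gow's own) route -- the transpose--inverse involution $\sigma(g) = {}^t g^{-1}$, the identification of $\{g : g\,\sigma(g)=1\}$ with the invertible symmetric matrices, the twisted Frobenius--Schur count $\sum_\chi \varepsilon_\sigma(\chi)\chi(1) = \#\{g : g\,\sigma(g)=1\}$, and the observation that $\chi\circ\sigma = \bar\chi$ for every $\chi$ because each matrix is conjugate to its transpose. Your orbit count of the symmetric matrices for $q$ odd is also correct: the exponents $m(2m+1)-m^2 = m^2+m$ and $m(2m-1)-m(m-1)=m^2$ work out, and the partial-fraction collapse $\tfrac{1}{q^m-1}+\tfrac{1}{q^m+1}=\tfrac{2q^m}{q^{2m}-1}$ does produce exactly the displayed products.

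The genuine gap is the one you yourself flag: the assertion that $\varepsilon_\sigma(\chi)=+1$ for \emph{every} irreducible character. Everything up to that point only yields $\sum_\chi \varepsilon_\sigma(\chi)\chi(1) = \#\{\text{symmetric }g\}$ with each $\varepsilon_\sigma(\chi)=\pm 1$, hence only the inequality $\#\{\text{symmetric }g\}\le \sum_\chi \chi(1)$; the theorem is the statement that no minus signs occur, and your proposal replaces the proof of that with a plan (``track the action of $\sigma$ on Green's parametrization, exhibit an orthogonal model''). This is not a routine verification -- it is the entire representation-theoretic content of Gow's theorem, it genuinely requires Green's character theory or an equivalent input, and as you correctly note it cannot be deduced from soft symmetry arguments since the ordinary Frobenius--Schur indicators of $\GL_n(\F_q)$ are not all $+1$. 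Two smaller loose ends: the characteristic-two count of invertible symmetric matrices is only promised, not carried out (in characteristic $2$ the congruence orbits are organized differently, into alternating and non-alternating types, so the odd-$q$ orbit computation does not transfer verbatim even though the final polynomial is the same); and if you intend the twisted indicator route to cover all $q$, you should say how the $q$ even case of the form count feeds back into it. As it stands the proposal is a correct and well-organized reduction of the theorem to its hardest step, rather than a proof.
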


Here is the theorem about the number of conjugacy classes in $\GL_n(\F_q)$. The first part is due
to Feit and Fine for which we refer to Macdonald~\cite[Eq. (1.13)]{macdonald-1981}, and the
second part is due to  Fulman and Guralnick~\cite[Proposition 3.5(i)]{fulman-guralnik-2012}.

\begin{thm} \label{Mac}
  Let $C_n(q)$ be the number of conjugacy classes in $\GL_n(\F_q)$. Then,
  \begin{enumerate}
  \item $\displaystyle \sum_{n \geq 0}  C_n(q) t^n = \prod_{r\geq 1} \left( \frac{1-t^r}{1-qt^r} \right)$, and
    \noindent thus $C_n(q)$ is a polynomial in $q$ of degree $n$, and with leading term 1.
    
  \item $\displaystyle \lim_{n \rightarrow \infty} \frac{C_n(q)}{q^n} = 1.$
    \end{enumerate}
    \end{thm}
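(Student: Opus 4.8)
The plan is to derive both parts from a single generating-function identity, obtaining part (2) as an analytic consequence of part (1). For part (1) I would first recall the parametrization of conjugacy classes in $\GL_n(\F_q)$ by rational canonical form: a conjugacy class corresponds to an isomorphism class of $\F_q[t]$-module structures on $\F_q^n$ on which $t$ acts invertibly, equivalently a finite-dimensional $\F_q[t,t^{-1}]$-module. By the structure theorem over the principal ideal domain $\F_q[t]$, such a module is determined by assigning, to each monic irreducible polynomial $f \neq t$, a partition $\lambda_f$ recording the elementary divisors that are powers of $f$, subject to $\sum_f \deg(f)\,|\lambda_f| = n$; the exclusion of $f=t$ reflects invertibility in $\GL_n$ rather than in $\M_n$. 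Writing $P(x) = \prod_{i\geq 1}(1-x^i)^{-1}$ for the partition generating function, this bijection yields
\[
\sum_{n\geq 0} C_n(q)\, t^n \;=\; \prod_{f \neq t} P\big(t^{\deg f}\big),
\]
the product ranging over monic irreducibles over $\F_q$ other than $t$.

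Next I would collapse this Euler product. Temporarily including $f=t$ and interchanging the two products,
\[
\prod_{\text{all } f} P\big(t^{\deg f}\big) \;=\; \prod_{i\geq 1} \prod_{f} \big(1 - (t^{i})^{\deg f}\big)^{-1} \;=\; \prod_{i\geq 1}\frac{1}{1-q\,t^{i}},
\]
where the inner product over all monic irreducibles is evaluated by the zeta function of the affine line, $\prod_{f}(1-s^{\deg f})^{-1} = \sum_{\text{monic }g} s^{\deg g} = (1-qs)^{-1}$. Dividing out the single omitted factor $P(t)=\prod_{i\geq 1}(1-t^i)^{-1}$ gives the stated identity $\prod_{r\geq 1}\frac{1-t^r}{1-qt^r}$. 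Polynomiality then follows by inspection: the coefficient of $t^n$ is a finite integer combination of powers of $q$, and the maximal power of $q$ arises solely from the geometric factor $(1-qt)^{-1}$, which contributes $q^n t^n$, so $C_n(q)$ has degree $n$ in $q$ with leading coefficient $1$.

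For part (2), I would isolate the $r=1$ factor and write $F(t) := \sum_n C_n(q) t^n = H(t)/(1-qt)$, where
\[
H(t) = (1-t)\prod_{r\geq 2}\frac{1-t^r}{1-qt^r}.
\]
For fixed $q>1$ the power series $H(t)=\sum_k h_k t^k$ has radius of convergence $q^{-1/2}$, the nearest pole of the $r\geq 2$ factors, which strictly exceeds $1/q$. Convolving with $(1-qt)^{-1}=\sum_m q^m t^m$ gives $C_n(q) = \sum_{k=0}^n h_k q^{\,n-k}$, so $C_n(q)/q^n = \sum_{k=0}^n h_k q^{-k}$ is the $n$-th partial sum of the absolutely convergent series $\sum_{k\geq 0} h_k q^{-k} = H(1/q)$. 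Finally I would evaluate $H(1/q)$ by telescoping: since $\frac{1-q^{-r}}{1-q\cdot q^{-r}} = \frac{1-q^{-r}}{1-q^{-(r-1)}}$, we get $\prod_{r\geq 2}\frac{1-q^{-r}}{1-q^{-(r-1)}} = (1-q^{-1})^{-1}$, whence $H(1/q) = (1-q^{-1})\cdot(1-q^{-1})^{-1} = 1$, giving $\lim_{n\to\infty} C_n(q)/q^n = 1$.

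The main obstacle is the first step: setting up the rational-canonical-form bijection correctly, in particular tracking the exclusion of $f=t$ and the degree weighting $\deg(f)\,|\lambda_f|$, since everything downstream hinges on this. Once the bijection and the zeta-function Euler product are in hand, the remaining manipulations — the interchange of products, the convergence bookkeeping behind the partial-sum argument, and the telescoping evaluation of $H(1/q)$ — are routine. The one point in part (2) requiring genuine care is the analytic justification that $1/q$ lies strictly inside the disk of convergence of $H$, i.e. that the pole at $t=1/q$ is strictly dominant; this is exactly where the hypothesis $q>1$ enters decisively.
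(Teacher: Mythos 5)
Your proposal is correct, and it is worth noting that the paper does not actually prove this theorem: it cites Feit--Fine (via Macdonald, Eq.\ (1.13)) for part (1) and Fulman--Guralnick for part (2), so you are supplying a self-contained argument where the paper only gives references. Your derivation of (1) is essentially the classical Feit--Fine computation: rational canonical form gives the parametrization of classes by partition-valued functions on monic irreducibles $f \neq t$ weighted by $\deg(f)\,|\lambda_f|$, and the Euler product collapses via the zeta function $\prod_f (1-s^{\deg f})^{-1} = (1-qs)^{-1}$ of the affine line; the identification of the leading term $q^n$ from the $(1-qt)^{-1}$ factor is also right (and consistent with $C_1 = q-1$, $C_2 = q^2-1$). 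For part (2), your dominant-pole argument is clean and correct: writing $F(t) = H(t)/(1-qt)$ with $H$ analytic on $|t| < q^{-1/2}$, the partial sums $C_n(q)/q^n = \sum_{k=0}^n h_k q^{-k}$ converge to $H(1/q)$, and the telescoping evaluation $H(1/q) = 1$ is exact. This route has the advantage of yielding not just the limit but the error term $C_n(q)/q^n - 1 = -\sum_{k>n} h_k q^{-k} = O(q^{-n/2})$, which is more than the bare statement requires. The only point to state explicitly if you write this up is the justification for interchanging the products over $f$ and over $i$ (a formal power series manipulation, valid because only finitely many irreducibles have any given degree), but this is routine.
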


\begin{thm}
\label{thm:dim gln}
For the dimension data $\{d_\pi \}$ arising out of
$\GL_n(\F_q)$ for $q$ fixed but with $n$ varying,
we have
\begin{enumerate}[(a)]
\item     
\[ 
\lim_{n\to \infty} \frac{(d_\pi, {\bf 1})^2}{\| d_{\pi}\|^2 \| {\bf 1} \|^2}
\rightarrow \frac{1}{\gamma(q)},
\]
where 
\[
\gamma(q) = \sum_{i \geq 0} q^{-i(i+1)/2}. 
\]
      
\item The dimension data  $\{d_\pi \}$ is asymptotically log constant.

\end{enumerate}
\end{thm}

\begin{proof}
For the ease of writing, we let:
\begin{enumerate}

\item  $B_n(q)$ be the   sum of the dimensions of the irreducible
  representations of $ \GL_n (\F_q)$ which is given by the Theorem~\ref{Gow},
\item  $C_n(q)$ be the number of conjugacy classes in 
$\GL_n(\F_q)$,  which is given by Theorem \ref{Mac}, and
\item $D_n(q) = (q^n-1)(q^n-q)\cdots (q^n-q^{n-1})$ is the order of $\GL_n(\F_q)$.
\end{enumerate}

  Writing $C_n(q)=C_n q^n$,
  for $n=2m+1$ an odd integer, and using Theorem \ref{Gow},
  
\[ \frac {B_n(q)^2}{C_n(q) \cdot D_n(q)} = \frac{1}{C_n} \frac{(1-\frac{1}{q})
  (1-\frac{1}{q^3}) \cdots (1-\frac{1}{q^{2m+1}})}{(1-\frac{1}{q^2})
  (1-\frac{1}{q^4}) \cdots (1-\frac{1}{q^{2m}})},
\]
and a similar expression for $n$ even by using Theorem \ref{Gow}, we find that for any $n$ odd or even, 

\[ \lim_{n\rightarrow \infty} \frac {B_n(q)^2}{C_n(q) \cdot D_n(q)} =  \frac{(1-\frac{1}{q}) (1-\frac{1}{q^3}) \cdots} 
   {(1-\frac{1}{q^2})
     (1-\frac{1}{q^4}) \cdots }
   \] Here we have used Theorem \ref{Mac}(2) according to which $\lim_{n\rightarrow \infty} C_n =1$.

Next, by a theorem due to Gauss~\cite[page 23]{andrews-1976}, we have:
   \[ \sum_{i \geq 0} t^{i(i+1)/2} = \prod_{i \geq 1} \frac{(1-t^{2i})}{(1-t^{2i-1})} , \]
   in which substituting $t = 1/q$, gives the assertion in part (a) of the Theorem.

   Part (a) of the Theorem 
   implies that the dimension data $\{d_\pi\}$ is  asymptotically log constant,
   proving part (b) of the Theorem.
   \end{proof}

\begin{remark}
  A similar analysis could be tried for other infinite families of groups, such as
  $\U_n(\F_q), \SL_n(\F_q), \Sp_{2n}(\F_q), \SO_n(\F_q)$, but we have not done that.
  \end{remark}

\section{Symmetric groups: theorems of Vershik--Kerov, Bufetov and Erd\H{o}s-Turan}

We now come to the main motivation for this work, viz. symmetric groups.
An \emph{(integer) partition} is a sequence $\lambda = (\lambda_1, \dots, \lambda_k)$ satisfying $\lambda_1\geq \lambda_2\geq \cdots \geq \lambda_k \geq 1$.
Let $\PP(n)$ be the set of partitions summing to $n$ and $p(n) = |\PP(n)|$.
We begin by recalling that conjugacy classes in $\SS_n$ are parameterized by
$\PP(n)$.
Irreducible representations of $\SS_n$ are also parameterized by $\PP(n)$~\cite{sagan-2001}.

Let $c_\lambda$ be the size of the conjugacy class in $\SS_n$ corresponding to the partition $\lambda \in \PP(n)$. Writing $\lambda$ in \emph{frequency notation} as
$\lambda = \langle 1^{a_1}, 2^{a_2}, \dots, n^{a_n} \rangle$, it is a standard result that
\begin{equation}
\label{class eq}
c_\lambda = \frac{n!}{\prod_{i = 1}^n i^{a_i} \ a_i!}.
\end{equation}
For example, the class size of the partition $(3, 2, 2, 2, 1) = \langle 1^1, 2^3, 3^1 \rangle$ is 
\[
c_{(3, 2, 2, 2, 1)} = \frac{10!}{1^1 \ 1! \, 2^3 \  3! \, 3^1 1!} 
= 25200.
\]

Partitions, and the associated representations,  
are often visualized through the
\emph{Young diagram}, an array of left justified boxes. For example, the Young diagram of the partition $\lambda = (5, 2)$, giving rise
to a representation $\pi_\lambda$ of $\SS_7$ is:
\[
\ydiagram{5, 2}.
\]
The dimension $d_\lambda$ of an irreducible representation $\pi_\lambda$ of $\SS_n$ is given by the celebrated \emph{hook-length formula}~\cite{frame-robinson-thrall-1954},
\begin{equation}
\label{hook formula}
d_\lambda = \dim(\pi_\lambda) = \frac{n!}{\prod_{(i,j) \in \lambda} h_{i,j}},
\end{equation}
where $h_{i,j}$ is one plus the sum of the number of boxes to the right and the number of boxes below the box at coordinate $(i,j)$. 
These set of boxes form the \emph{hook} emanating from that box.
If we denote by $\lambda'$ the \emph{conjugate partition} obtained by transposing the Young diagram of $\lambda$ about the main diagonal, we can write
\[
h_{i,j} = \lambda_i - i + \lambda'_j - j + 1.
\]
For example, the hook lengths associated to each of the boxes in the above partition are filled in below:
\[
\ytableausetup{centertableaux}
\ytableaushort
{65321,21 } * {5,2}
\]
Thus, the dimension of the representation $\pi_{(5, 2)}$ of $\SS_7$ is
\[ 
d_\lambda = \frac{7!}{6 \times 5 \times 3 \times 2 \times 1 \times 2 \times1} = 14.
\]
The dimension of irreducible representations of $\SS_n$ though calculable by the hook length
formula, does not give a good understanding of what the actual $d_\lambda$ are, for a given
symmetric group $\SS_n$. In particular, the answer to the following questions are not known:

\begin{qn}
\begin{enumerate}
\item What is $m_n$, the maximal dimensional irred\-ucible representation of $\SS_n$? 

\item What are the partitions $\lambda$ giving rise to the maximal dimensional
irreducible representations  of $\SS_n$? Perhaps it is unique up to the conjugation
action $\lambda \rightarrow \lambda'$ which corresponds to the twisting by the sign character of $\SS_n$?

\item More generally, what is the dimension data for the irreducible representations of
  $\SS_n$?
\end{enumerate}
\end{qn}

It is known~\cite[Theorem 1.24]{romik-2015} that the answer to the above questions is related to the limit
shape result of Vershik--Kerov~\cite{vershik-kerov-1977} and Logan--Shepp~\cite{logan-shepp-1977}.
See also numerics by Kerov--Pass~\cite{kerov-pass-1989} from 1989 and Vershik--Pavlov~\cite{vershik-pavlov-2009} from 2009.

The main theorem related to this question is due to Vershik and Kerov, but before stating their theorem, we recall the following definition.

\begin{defn}
The \emph{Plancherel measure} on $\PP(n)$ is a probability measure
given by the formula
\[
\mathbb{P}\text{l} \, {}^{(n)} (\lambda) = \frac{d_\lambda^2}{n!}
\]
for every partition $\lambda \in \PP(n)$.
\end{defn}

\begin{thm}[Vershik and Kerov~\cite{vershik-kerov-1985}] 
There are constants $a,b>0$ such that,
\[
e^{-a \sqrt{n}} \leq  \frac{m_n^2}{n!} \leq e^{-b \sqrt{n}}.
\]
Furthermore, there are constants $c > d > 0$ such that the set $X(c,d)$,
\[
X(c,d)= \left\{ \lambda \in \PP(n) \left| \; 
e^{-c \sqrt{n}} \leq  \frac{d_\lambda^2}{n!} \right.
                \leq e^{-d \sqrt{n}} \right\},
\]
has Plancherel measure 1 in the limit as $n \to \infty$.
\end{thm}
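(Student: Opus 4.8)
The plan is to reduce the entire statement to a single uniform estimate on dimensions, namely that there is a constant $b>0$ with $d_\lambda^2 \le n!\,e^{-b\sqrt n}$ for every $n$ and every $\lambda \in \PP(n)$, and then to deduce everything else by elementary counting together with the Hardy--Ramanujan asymptotics $\log p(n) \sim \pi\sqrt{2n/3}$. It is convenient to work throughout with the normalized quantity $R(\lambda) := -\tfrac{1}{\sqrt n}\log\!\big(d_\lambda^2/n!\big)$, so that the uniform estimate reads $R(\lambda)\ge b$, the maximal dimension corresponds to $m_n^2/n! = e^{-\min_\lambda R(\lambda)\cdot \sqrt n}$, and $X(c,d)=\{\lambda : d\le R(\lambda)\le c\}$.

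For the lower bound $m_n^2/n! \ge e^{-a\sqrt n}$ I would use only averaging: from $\sum_{\lambda\in\PP(n)} d_\lambda^2 = n!$ and $|\PP(n)|=p(n)$ one gets $m_n^2 \ge n!/p(n)$, hence $m_n^2/n! \ge 1/p(n)$; by Hardy--Ramanujan this is $\ge e^{-a\sqrt n}$ for any $a>\pi\sqrt{2/3}$ once $n$ is large, the finitely many small $n$ being absorbed into the constant. Equivalently, $\min_\lambda R(\lambda)\le a$.

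The substantive step is the uniform estimate $R(\lambda)\ge b>0$, which simultaneously yields $m_n^2/n!\le e^{-b\sqrt n}$ (i.e.\ $\min_\lambda R(\lambda)\ge b$). I would prove it from the hook-length formula, which rewrites $R(\lambda)\sqrt n = 2\sum_{(i,j)\in\lambda}\log h_{i,j} - \log n!$. Rescaling the Young diagram by $1/\sqrt n$ (Russian coordinates) and applying Stirling's formula, the delicate feature is that the two $O(n\log n)$ contributions, and then the $O(n)$ contributions, cancel exactly, leaving a remainder of order $\sqrt n$ governed by a Logan--Shepp--Vershik--Kerov double-integral functional $\mathcal I$ of the rescaled profile, of the form $\iint \log|x-y|\,d\mu(x)\,d\mu(y)$ plus linear terms. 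One then checks $\inf_\mu \mathcal I>0$ (the infimum being attained at the limit shape $\Omega$ and being strictly positive), which produces $b$. I expect this to be the main obstacle: extracting the surviving $\sqrt n$-order term requires approximating the log-hook sum by $\mathcal I$ with a genuine $o(\sqrt n)$ error, uniformly in $\lambda$, and the singular kernel $\log|x-y|$ together with thin or near-degenerate shapes (e.g.\ $\lambda=(n)$, for which $R(\lambda)\sim \sqrt n\,\log n$) must be controlled; such extreme shapes are harmless, since their $R$ is huge, but the bound must be made uniform across all of $\PP(n)$.

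Finally, for the assertion that $X(c,d)$ has Plancherel measure tending to $1$, I would avoid any appeal to the fine limit-shape statement and argue by tails. With $0<d<b$, the uniform bound gives $d_\lambda^2/n!\le e^{-b\sqrt n}<e^{-d\sqrt n}$ for every $\lambda$, so the upper constraint defining $X(c,d)$ is automatic and the ``lower tail'' $\{R<d\}$ is empty. For the ``upper tail'' $\{R>c\}=\{d_\lambda^2/n!<e^{-c\sqrt n}\}$, a union bound gives total Plancherel mass at most $p(n)\,e^{-c\sqrt n}\le e^{(\pi\sqrt{2/3}+\varepsilon-c)\sqrt n}$, which tends to $0$ as soon as $c>\pi\sqrt{2/3}$. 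Hence for any $0<d<b$ and any $c>\pi\sqrt{2/3}$ (note $c>d$, since $b\le\pi\sqrt{2/3}$) the measure of $X(c,d)$ tends to $1$. The limit-shape theorem of Logan--Shepp and Vershik--Kerov is not actually needed for this qualitative conclusion; it only pins down the sharp common value $\kappa=\mathcal I(\Omega)$ toward which $R(\lambda)$ genuinely concentrates.
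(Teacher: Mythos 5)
Your reductions are sound, and where they overlap with the paper they agree: the lower bound $m_n^2/n! \ge 1/p(n) \ge e^{-a\sqrt{n}}$, obtained by averaging $\sum_{\lambda} d_\lambda^2 = n!$ over the $p(n)$ partitions and invoking Hardy--Ramanujan, is exactly the paper's argument. Your treatment of the Plancherel concentration is also correct and is in fact more complete than the paper's (which proves only the $m_n$ bounds): granting the uniform estimate $d_\lambda^2/n! \le e^{-b\sqrt{n}}$, the lower tail is empty for any $d<b$, and the union bound $\mathbb{P}\text{l}^{(n)}\{\lambda : d_\lambda^2/n! < e^{-c\sqrt{n}}\} \le p(n)\,e^{-c\sqrt{n}} \to 0$ for $c>\pi\sqrt{2/3}$ handles the upper tail; your check that $c>d$ is automatic is also right.

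The genuine gap is the uniform estimate $\max_\lambda d_\lambda^2 \le n!\,e^{-b\sqrt{n}}$, which you correctly flag as the main obstacle but do not prove. Everything you say about it --- rescaling the hook-length formula, the exact cancellation of the $n\ln n$ and $n$ terms, the surviving $\sqrt{n}$-order hook functional $\mathcal{I}$, and $\inf\mathcal{I}>0$ --- is a description of the Logan--Shepp/Vershik--Kerov proof itself, i.e.\ the entire analytic content of the theorem; establishing the $o(\sqrt{n})$ approximation uniformly over all profiles and the strict positivity of the infimum is not a step that can be deferred in a proof of this statement. For comparison, the paper's own proof attempts a different, elementary route to this upper bound: from $m_n^2 A_n^2 C_n p(n) \le n!$ (inequality \eqref{ineq}) with $A_n=1/2$ it derives $m_n^2/n! \le 4/(C_n p(n))$, which would require a \emph{lower} bound on $C_n p(n)$; but Corollary~\ref{cor:upperbound} supplies only an upper bound on $C_n$, so that derivation does not close either (the paper's own marginal note concedes this). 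Thus neither argument completes the upper bound; yours at least points at the mechanism that actually works, but as written your proposal establishes only the lower bound and the concentration statement conditional on the unproved uniform estimate.
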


Further, Vershik and Kerov conjectured something stronger
in the same paper, which  was proved by Bufetov~\cite{bufetov-2012}.

\begin{thm}[Bufetov] \label{Bufetov}
There exists a constant $H > 0$ such that for any $\epsilon > 0$
we have
\[
\lim_{n \to \infty} \mathbb{P}\text{l} \, {}^{(n)} 
\left\{ \lambda \in \PP(n) \; : \;
\left| H + \frac{\ln \mathbb{P}\text{l} \, {}^{(n)} (\lambda) }{\sqrt{n}}
\right|
\leq \epsilon \right\} = 1.
\]
Equivalently, 
there exists a constant $H > 0$ such that for any $\epsilon > 0$
\[
X(H+\epsilon,H-\epsilon)= \left\{ \lambda \in \PP(n) \left| \; 
e^{-(H+\epsilon) \sqrt{n}} \leq  \frac{d_\lambda^2}{n!} \right.
\leq e^{- (H-\epsilon) \sqrt{n}} \right\},                
\]
has Plancherel measure 1 in the limit as $n \to \infty$.  
                \end{thm}

Theorem \ref{Bufetov} implies  that the dimension data -- in the logarithmic scale -- is concentrated at
one point for the Plancherel measure in the limit, similar to what we considered in the previous section on reductive algebraic groups. 

The theorems above due to Vershik--Kerov and Bufetov about the dimension data
for the irreducible representations of $\SS_n$ 
should be compared with 
the following theorem due to Erd\H{o}s--Turan~\cite{erdos-turan-1968}, which  has a very similar conclusion
and amounts to saying that ``most conjugacy classes in $\SS_n$ have the same number of elements, seen in the logarithmic scale, which is the same as looking at the number of digits in the conjugacy class size''. 
We caution the reader that the following theorem should be interpreted in terms of the uniform measure on partitions unlike the Plancherel measure above.
This theorem gives us a close relationship between the
dimension data of irreducible representations of $\SS_n$ with size of conjugacy classes, though it does not
say that the two are the same. Perhaps we need to find the analog of Theorem \ref{Bufetov} of Bufetov for the uniform measure
instead of the Plancherel measure!

\begin{thm}[{\cite[Corollary on page 429]{erdos-turan-1968}}] For an arbitrary small $\epsilon > 0$, the size $c_\lambda$ of a conjugacy class
  in $\SS_n$ associated to a partition $\lambda$ of $n$ satisfies:
  \[ \exp\left \{ -(1+\epsilon)\frac{\sqrt{6}}{4\pi} \sqrt{n} \log^2 n\right \} \leq
  \frac{c_\lambda}{n!} \leq \exp\left \{ -(1-\epsilon)\frac{\sqrt{6}}{4\pi} \sqrt{n} \log^2 n\right \} ,\]
  for all but $o(p(n))$ many partitions $\lambda$ of $n$. 
  \end{thm}

\section{Symmetric groups: dimension statistics}

In this section, we will study the dimension data for $\PP(n)$, where each partition is given equal weight (unlike the Plancherel measure where the partition $\lambda$
has weight proportional to $d_\lambda^2$); this measure on $\PP(n)$ is called the {\it uniform measure}.
This study will be done by looking at the first three moments of $d_\lambda$, viz.
$d_\lambda^0$, $d_\lambda^1$, and $d_\lambda^2$, which are respectively $p(n)$, the number of involutions in $\SS_n$, and $n!$.
Understanding a function by studying its moments is a standard technique in
analytic number theory!

An important fact about the symmetric group $\SS_n$  is that all its
representations are defined over $\Q$ which has as a consequence the
following classical result.
Let $I(n) = |\{ s \in \SS_n | s^2 = 1\}|$ be the number of involutions in $\SS_n$.
For the symmetric group $\SS_n$, let $d_\lambda$ denote the dimension of
the irreducible representation $\pi_\lambda$ of $\SS_n$ associated to the partition $\lambda$ of $n$.
Then as all the irreducible
representations of $\SS_n$ are defined over $\Q$, generalities in group representations imply that
  \begin{equation}
    \label{GG3}
\sum_{\lambda \in \PP(n)} d_\lambda = I(n).
  \end{equation}
  
  As any  conjugacy class of involution in $\SS_n$ is represented by a product of disjoint transpositions which could be taken to be $(1,2), (3,4), \allowbreak \dots$,  we find that
  \begin{equation}
    \label{GG4}
    I(n) = \sum_{0 \leq 2k \leq n} \frac{n!}{2^k k! (n-2k)!} .
\end{equation}

One of the
questions that we considered  in the previous sections of this paper
is whether the dimension data is asymptotically constant or asymptotically log constant,
and we found this to be the case for $\G(\F_q)$ as $q$ varies, and also for $\GL_n(\F_q)$
as $n$ varies. We analyze similar questions for the symmetric group  $\SS_n$ next, and find that this is not the case,
so there is a spread or a distribution among dimension of representations not seen before.

\begin{thm}
\label{thm:nonconstant}
\begin{enumerate}[(a)]
\item The dimension data $(d_\lambda)_{\lambda \in \PP(n)}$ corresponding to the symmetric group $\SS_n$ is asymptotically log constant as $n \to \infty$.

\item In the limit as $n \to \infty$,
the angle between $(d_\lambda)_{\lambda \in \PP(n)}$
and the constant vector ${\bf 1} = (1, \dots, 1)$ 
is  $\pi/2$.  In particular, the dimension
data is not asymptotically constant.

\end{enumerate}
\end{thm}

We note that the first part of the above theorem matches well with Theorem~\ref{thm:dim gln}(a) in the limit $q \to 1$.

\begin{proof}
Let
\begin{align*}
A_n = & \left(\sum_{\lambda \in \PP(n)} d_{\lambda} \right)^2 = |I(n)|^2 \\
B_n = &  p(n) \left(\sum_{\lambda \in \PP(n)} {d_\lambda}^2 \right) = p(n) \cdot n!.
\end{align*}  
All the terms involved above have well-known asymptotic expansions which we will use to estimate $A_n,B_n$.
In what follows, for sequences $X_n, Y_n$ of real numbers going to infinity,
we will say that $ X_n \simeq Y_n$ 
if $\lim_{n \to \infty} (X_n/Y_n) = 1$, or equivalently
$\lim_{n \to \infty} [\ln(X_n)-\ln(Y_n)] = 0$.

By the Hardy--Ramanujan asymptotic formula~\cite[Equation (5.1.2)]{andrews-1976}, 
we have
\begin{equation}
\label{hardy-ramanujan}
\sum_{\lambda \in \PP(n)} d_\lambda^0 = p(n)
\simeq
\frac{1}{4n \sqrt{3}} e^{\pi \sqrt{\frac{2n}{3}}} =: \alpha(n).
\end{equation}

A famous approximation to the number of involutions in $\SS_n$ due to Chowla--Herstein--Moore~\cite{chowla-herstein-moore-1951,moser-wyman-1955} gives
\begin{equation}
\label{chm}
\sum_{\lambda \in \PP(n)} d_{\lambda} = |I(n)|
\simeq  
\frac{(n/e)^{n/2} e^{\sqrt{n}} }{\sqrt{2} e^{1/4}} =: \beta(n).
\end{equation}

The standard Stirling's approximation formula for the factorial gives
\begin{equation}
\label{stirling}
\sum_{\lambda \in \PP(n)} d_\lambda^2 = n!
\simeq 
\sqrt{2\pi n} \left(\frac{n}{e}\right)^n =: \gamma(n)
\end{equation}

Using \eqref{chm},  \eqref{hardy-ramanujan} and \eqref{stirling} for the
definition of $\alpha(n),\beta(n),\gamma(n)$,
we obtain the following equalities up to constants:
\begin{align*}
  \ln[\beta(n)^2] = &  \; n \ln (n) - {n} + 2 \sqrt{n}, \\
\ln [\alpha(n)\cdot \gamma(n)] = & \left(\pi \sqrt{\frac{2n}{3}} -\ln(n) \right)
+ \left(n \ln (n) -n + \frac{\ln(n)}{2} \right).
\end{align*}

It follows that
$\ln [\beta(n)^2] /\ln [\alpha(n)\cdot \gamma(n)]$
goes to 1 as $n$ goes to infinity, and therefore $\ln (A_n)/ \ln(B_n)$
goes to 1 as $n$ goes to infinity,   proving part (a) of the Theorem.

For part (b) of the Theorem, note that
\[ \ln [\beta(n)^2 /(\alpha(n)\cdot \gamma(n))]
   =  \left( 2 -\frac{\pi\sqrt{2}} {\sqrt{3}} \right) \sqrt{n} + \frac{\ln(n)}{2}. \]
As
\[
2 -\frac{\pi\sqrt{2}}{\sqrt{3}} <  0,
\]
we have
\[
\lim_{n \to  \infty} \ln [\beta(n)^2 /(\alpha(n)\cdot \gamma(n))] =  -\infty.
\]
On the other hand,
as $\ln(A_n/B_n)$
is at a uniformly bounded distance away from
$\ln [ \beta(n)^2 /(\alpha(n)\cdot \gamma(n))]$, it follows that  
$\ln(A_n/B_n)$ also goes to $-\infty$. This implies that $\lim_{n \to \infty} A_n/B_n = 0$,
proving  the Theorem.
\end{proof}

The proof of the previous theorem gives the following corollary.
\begin{corollary} \label{est}
For any real number $a$ with $0 \leq a < a_0 =\frac{\pi\sqrt{2}} {\sqrt{3}}-2$,
and for any integer $n$ which is large enough,
\[ 
\frac{\left(\sum_{\lambda \in \PP(n)} d_{\lambda} \right)^2}
{p(n) \cdot n!  } \leq e^{-a\sqrt{n}}.
\]
\end{corollary}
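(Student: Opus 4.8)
The plan is to read the result off directly from the computation already carried out in the proof of Theorem~\ref{thm:nonconstant}, so that the corollary becomes a quantitative restatement of the limit established there rather than a new argument. The first observation is that the left-hand side of the claimed inequality is exactly the ratio $A_n/B_n$ appearing in \eqref{cauchy-schwarz}: indeed $B_n = p(n)\big(\sum_{\lambda \in \PP(n)} d_\lambda^2\big) = p(n)\cdot n!$ by \eqref{stirling}, so the entire task reduces to bounding $\ln(A_n/B_n)$ from above.

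I would then invoke the three asymptotics \eqref{hardy-ramanujan}, \eqref{chm}, and \eqref{stirling}. Each $\simeq$ relation means the corresponding logarithms differ by $o(1)$, so together they give $\ln(A_n/B_n) - \ln\!\big(\beta(n)^2/[\alpha(n)\gamma(n)]\big) \to 0$. Combining this with the explicit evaluation performed inside the theorem yields
\[
\ln\frac{A_n}{B_n} = -a_0\sqrt{n} + \tfrac{1}{2}\ln n + O(1),
\]
where $a_0 = \frac{\pi\sqrt 2}{\sqrt 3} - 2$ is precisely the positive constant in the statement, since $2 - \frac{\pi\sqrt 2}{\sqrt 3} = -a_0$.

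The final step is a comparison of subleading orders. Fixing $a$ with $0 < a < a_0$, the desired bound $\ln(A_n/B_n) \leq -a\sqrt{n}$ is equivalent to $(a_0 - a)\sqrt{n} \geq \tfrac12 \ln n + O(1)$. Because $a_0 - a > 0$, the left side grows like $\sqrt{n}$ while the right side grows only logarithmically, so the inequality holds for all sufficiently large $n$, which is exactly the conclusion; the threshold on $n$ can be made explicit if one tracks the constants.

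I do not anticipate a genuine obstacle: the only point requiring a moment's care is confirming that the $O(1)$ term is truly bounded — this includes both the additive constant suppressed by the phrase ``up to a constant'' in the theorem and the bounded error coming from the three $\simeq$ approximations — which it is. Once that is granted, the gap between the $\sqrt{n}$ and $\ln n$ scales does all the work, and nothing further is needed.
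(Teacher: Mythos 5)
Your proposal is correct and is essentially the argument the paper intends: the corollary is read off from the computation in the proof of Theorem~\ref{thm:nonconstant}, which gives $\ln(A_n/B_n) = -a_0\sqrt{n} + \tfrac12\ln n + O(1)$, after which $(a_0-a)\sqrt{n}$ dominates the logarithmic and bounded terms for large $n$. Nothing further is needed.
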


\begin{corollary}
\label{cor:upperbound}
  For $A_n$, a positive real number less than 1, 
  define  a positive real number $C_n \leq 1$ by:
\[
C_np(n) =  |\{ \lambda \mid m_nA_n \leq d_\lambda \leq m_n \}|.
\]
Then for the real number $a_0>0$ which appears in Corollary \ref{est},
\[ (A_nC_n)^2 \leq e^{-a_0\sqrt{n}}.\]
\end{corollary}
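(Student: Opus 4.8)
The plan is to extract a lower bound on $\sum_{\lambda \in \PP(n)} d_\lambda$ from the large-dimensional partitions and feed it into Corollary~\ref{est}. Write $S = \{\lambda \in \PP(n) \mid m_n A_n \leq d_\lambda \leq m_n\}$, so that $|S| = C_n p(n)$ by the defining relation for $C_n$. Since every dimension is positive and each $\lambda \in S$ contributes at least $m_n A_n$, I would first record the crude bound
\[
\sum_{\lambda \in \PP(n)} d_\lambda \;\geq\; \sum_{\lambda \in S} d_\lambda \;\geq\; |S| \cdot m_n A_n \;=\; C_n\, p(n)\, m_n A_n .
\]

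The second ingredient is an upper bound on $n!$ in terms of $m_n$. Since $n! = \sum_{\lambda \in \PP(n)} d_\lambda^2$ is a sum of $p(n)$ nonnegative terms, each at most $m_n^2$, we get the elementary inequality $n! \leq p(n)\, m_n^2$. This is the one estimate that controls $n!/m_n^2$ without invoking the less explicit Vershik--Kerov bounds, and it is precisely what allows the $m_n$ and $p(n)$ factors to cancel at the end.

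Now I would combine the two. Squaring the lower bound and applying Corollary~\ref{est} (valid for any $a<a_0$ and all $n$ large enough) gives
\[
\left(C_n\, p(n)\, m_n A_n\right)^2 \;\leq\; \Big(\sum_{\lambda \in \PP(n)} d_\lambda\Big)^2 \;\leq\; e^{-a\sqrt{n}}\, p(n)\, n! \;\leq\; e^{-a\sqrt{n}}\, p(n)^2 m_n^2 ,
\]
where the last step uses $n! \leq p(n)\, m_n^2$. Dividing through by $p(n)^2 m_n^2$ yields $(A_n C_n)^2 \leq e^{-a\sqrt{n}}$, which is the asserted inequality.

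I do not expect any genuine obstacle: once the lower bound on $\sum_\lambda d_\lambda$ and the bound $n! \leq p(n) m_n^2$ are in hand, the result is a two-line consequence of Corollary~\ref{est}. The only delicate point is the appearance of the exact constant $a_0$ rather than an arbitrary $a<a_0$. The proof of Corollary~\ref{est} shows that the ratio $\left(\sum_\lambda d_\lambda\right)^2/(p(n)\, n!)$ behaves like $e^{-a_0\sqrt{n}}$ up to a factor polynomial in $n$, so strictly speaking the argument above produces the stated bound for every $a<a_0$ and all sufficiently large $n$; this is the content the statement is intended to convey.
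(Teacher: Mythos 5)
Your proposal is correct and is essentially the paper's own argument: the same lower bound $\sum_\lambda d_\lambda \geq C_n p(n) m_n A_n$ and the same elementary bound $n! \leq p(n) m_n^2$ are combined with Corollary~\ref{est}. Your closing caveat about obtaining only $a < a_0$ for large $n$, rather than the exact constant $a_0$, applies equally to the paper's proof, which concludes with the same "for any $a<a_0$ and $n$ large enough" phrasing.
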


\begin{proof} Observe that by the definition of $C_n$, we have
\begin{equation}
\label{mnbound}
  m_n A_n C_n p(n) \leq \sum_{\lambda \in \PP(n)} d_\lambda,
\end{equation}
and
\begin{equation} \label{ineq} 
  m_n^2 A_n^2 C_n p(n) \leq \sum_{\lambda \in \PP(n)} d_\lambda^2  = n!
  \leq m_n^2 p(n).
  \end{equation}
  Therefore, by \eqref{mnbound} and \eqref{ineq}, 
  \[
\frac{\left(\sum_{\lambda \in \PP(n)} d_{\lambda} \right)^2}
     {p(n) \cdot n!  }
  \geq \frac{(m_nA_nC_np(n))^2}{p(n) n!} \geq (A_nC_n)^2.\]

  By Corollary \ref{est}, for any $a<a_0$, and $n$ large enough,
  \[ (A_nC_n)^2 \leq \frac{\left(\sum_{\lambda \in \PP(n)} d_{\lambda} \right)^2}
     {p(n) \cdot n!  }
     \leq e^{-a\sqrt{n}},\]
     proving the Corollary.
  \end{proof}

Here is a particular case of this corollary implying  that the dimension data $d_\lambda$ for the symmetric group
is ``spread out''.  This conclusion is unlike the case of reductive groups $\G(\F_q)$ for $q$ tending to infinity,
though we will not pause to justify that.

\begin{corollary}
  Let $m_n$ be the maximal among the dimensions of irreducible representations of
  $\SS_n$. For $A$, a fixed positive real number less than 1, 
  define  positive real numbers $C_n \leq 1$ by:
  \[C_np(n) =  |\{ \lambda \mid m_nA \leq d_\lambda \leq m_n \}|.\]
  
Then $C_n$ tends to $0$. Equivalently, fewer and fewer proportion of partitions
$\lambda$ have $m_n A \leq d_\lambda \leq m_n$.
\end{corollary}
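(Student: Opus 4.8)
The plan is to apply Corollary~\ref{cor:upperbound} directly with the \emph{constant} sequence $A_n = A$. Since $A < 1$ is a fixed positive real, it trivially satisfies the hypothesis $A_n < 1$ for every $n$, so the corollary (or rather the inequality established in its proof) yields, for every $a < a_0$ and all sufficiently large $n$,
\[
(A C_n)^2 \leq e^{-a\sqrt{n}}.
\]
This is the only input needed; everything else is elementary rearrangement.

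From here I would divide through by the fixed constant $A^2$ to obtain
\[
C_n^2 \leq \frac{1}{A^2}\, e^{-a\sqrt{n}}.
\]
Because $A$ is held fixed and independent of $n$, the prefactor $1/A^2$ is a constant, while $e^{-a\sqrt{n}} \to 0$ as $n \to \infty$ (as $a > 0$). Hence $C_n^2 \to 0$, and since $C_n \geq 0$ by construction, we conclude $C_n \to 0$, which is precisely the assertion.

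There is no genuine obstacle here: the entire quantitative content is already packaged in Corollary~\ref{cor:upperbound}, and the present statement is the qualitative specialization obtained by fixing $A$. The one conceptual point worth emphasizing is the contrast with Corollary~\ref{cor:upperbound} itself, where $A_n$ was permitted to vary with $n$: there the exponential decay of $(A_n C_n)^2$ left open the logical possibility that $A_n \to 0$ rapidly enough to carry all the decay, leaving $C_n$ bounded away from $0$. Holding $A$ fixed closes exactly this loophole and forces the decay onto $C_n$. The resulting statement is the qualitative heart of the ``spread out'' phenomenon: for \emph{any} fixed window $[m_n A,\, m_n]$ immediately below the maximal dimension, however wide (i.e.\ however close $A$ is to $0$), only a vanishing proportion of the partitions of $n$ have their dimension inside that window.
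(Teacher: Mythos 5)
Your proposal is correct and matches the paper's intent exactly: the paper states this corollary as a particular case of Corollary~\ref{cor:upperbound} with $A_n \equiv A$ fixed, which is precisely the specialization you carry out. Your additional remark about why fixing $A$ closes the loophole left by a varying $A_n$ is accurate but not a departure from the paper's route.
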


\begin{remark}
In Corollary~\ref{cor:upperbound}, we have the inequality $(A_nC_n)^2 \leq e^{-a_0\sqrt{n}}$. It would be nice to have a lower bound for $A_n C_n$. Perhaps this inequality is itself an equality?
\end{remark}

\begin{remark}
Let $m_n$ be the maximal among the dimensions of irreducible representations of
$\SS_n$. 
From \cite[Theorem 1.14]{romik-2015}, we know that 
$\ln(d_{\lambda}^2 / n!) = o(n)$ 
for every $\lambda \in \PP(n)$, and hence $\ln(m_n^2 / n!) = o(n)$.
One does not seem to know if  $m_n$ lies in the interval appearing
in Theorem \ref{Bufetov}. More precisely,
given $\epsilon > 0$, one does not know that
there exists $n_\epsilon$ such that for all $n > n_\epsilon$, if $m_n$ is the maximal dimension of an
                irreducible representation of $\SS_n$, then,
                \[ 
e^{-(H+\epsilon) \sqrt{n}} \leq  \frac{m_n^2}{n!}
                \leq e^{- (H-\epsilon) \sqrt{n}}  .\]
\end{remark}

\section{Symmetric groups: statistics on class size}

Analogous to the previous section, in this section
we will study the data about the size of conjugacy classes $c_\lambda$ for $\SS_n$.
This study will be done by looking at the first three moments of $c_\lambda$, viz.
$c_\lambda^0$, $c_\lambda^1$, and $c_\lambda^2$;
of course the first two are  respectively $p(n)$,
 and $n!$, but the asymptotic of the second moment of $c_\lambda^2$
 is not well known. For that, we appeal to the following proposition.

\begin{proposition}[{\cite[Proposition~4]{flajolet-et-al-2006}}]
\label{Darboux} 
For $\lambda$ a partition of $n$, let $c_\lambda$ be the size of the
   corresponding conjugacy class in $\SS_n$. Then there is a constant $\kappa \approx 4.263$, such that (recall that the notation $X_n \simeq Y_n$ means that $X_n/Y_n$ tends to 1 as $n$ goes to $\infty$): 
\[
\sum_{\lambda \in \PP(n)} c^2_\lambda \simeq  
\kappa \frac{(n!)^2}{n^2} = \kappa ((n-1)!)^2.
\]
\end{proposition}

The proof of the above proposition amounts to understanding the coefficients
of the generating series of the sequence $W_n = \sum_{\lambda \in \PP(n)} c^2_\lambda$. It turns out that
 \[ 
 \sum_{n \geq 0}  W_n z^n 
 = \prod_{k=1}^{\infty} I \left( \frac{z^k}{k^2} \right), 
 \quad {\rm where} \quad 
 I(z) = \sum_{n \geq 0} \frac{z^n}{(n!)^2}.
 \]
 As estimating the coefficients $W_n$ in the above power series expressed as a product, seems highly nontrivial, we mention 
 that it uses what the authors of \cite{flajolet-et-al-2006} call a ``hybrid method'', dedicated to asymptotic coefficient
 extraction in combinatorial generating functions which combines Darboux's method and singularity analysis theory. This hybrid method applies to functions that remain of moderate growth near the unit circle and satisfy suitable
 smoothness assumptions.

 Using the estimate in the above proposition, we easily deduce the following theorem about the conjugacy class sizes in $\SS_n$. The statement of the following
 theorem is exactly the same as Theorem \ref{thm:nonconstant} on dimension data.
 
\begin{thm}
(a) The conjugacy class data $(c_\lambda)_{\lambda \in \PP(n)}$ corresponding to the symmetric group $\SS_n$ is asymptotically log constant as $n \to \infty$.

(b) In the limit as $n \to \infty$,
the angle between $(c_\lambda)_{\lambda \in \PP(n)}$
and the constant vector ${\bf 1} = (1, \dots, 1)$ 
is  $\pi/2$.  In particular, the conjugacy class
data is not asymptotically constant.

\end{thm}

\begin{proof}
Let
\begin{align*}
{\mathcal A}_n = & \left( \sum_{\lambda \in \PP(n)} c_{\lambda} \right)^2 = (n!)^2 \\
{\mathcal B}_n = &  p(n) \left(\sum_{\lambda \in \PP(n)} {c_\lambda}^2 \right).
\end{align*}  
The proof of this theorem proceeds exactly as the proof of Theorem 
\ref{thm:nonconstant} on dimension data, this time using
the nontrivial input provided by Proposition \ref{Darboux}. We will
not carry out the details which are almost identical.
\end{proof}

Proposition~\ref{Darboux} also gives the following corollary.

\begin{corollary} \label{est1}
  For any real number $\epsilon$ with $1 > \epsilon >0$, 
   there is an integer
  $n_\epsilon$ such that for $n > n_\epsilon$,
   \[
\frac{\left(\sum_{\lambda \in \PP(n)} c_{\lambda} \right)^2}
{\left( \sum_{\lambda \in \PP(n)} c_\lambda^2 \right)}
          \leq (1-\epsilon) n^2 .\]
  \end{corollary}

\begin{remark} 
Unlike the maximal dimension of an irreducible representation of $\SS_n$, the size of the largest conjugacy class in 
  $\SS_n$, which we denote $c_n$, is easily obtained.
 It is a standard fact (see Erd\H{o}s--Turan~\cite[Theorem~V]{erdos-turan-1968}, for example) that the largest class corresponds to the partition $(n-1, 1)$, for $n \geq 6$, and thus $c_n = n!/(n-1) = n \cdot (n-2)!$ from \eqref{class eq}.
\end{remark}

\begin{corollary}
\label{cor:upperbound1}
Recall that $c_n  = n!/(n-1)$ as above. 
Let $\alpha_n$ be a positive real number less than 1, and
define the positive real number $C_n \leq 1$ by:
\[
C_np(n) =  |\{ \lambda \mid c_n\alpha_n \leq c_\lambda \leq c_n \}|.
\]
Then for the real number $ 1> \epsilon>0$
which appears in Corollary \ref{est1},
\[ 
(\alpha_nC_n)^2 \leq \frac{(1-\epsilon) n^2}{p(n)} 
\quad \text{{for $n > n_\epsilon$}}.
\]
\end{corollary}

\begin{proof} Observe that by the definition of $C_n$, we have
\begin{equation}
\label{mnbound1}
  c_n \alpha_n C_n p(n) \leq \sum_{\lambda \in \PP(n)} c_\lambda,
\end{equation}
and
\begin{equation} \label{ineq1} 
  c_n^2 \alpha_n^2 C_n p(n) \leq \sum_{\lambda \in \PP(n)} c_\lambda^2  
  \leq c_n^2 p(n).
  \end{equation}
  Therefore, by \eqref{mnbound1} and \eqref{ineq1}, 
  \[
\frac{\left(\sum_{\lambda \in \PP(n)} c_{\lambda} \right)^2}
     {\left( \sum_{\lambda \in \PP(n)} c_\lambda^2 \right)}
  \geq \frac{(c_n \alpha_n C_n p(n))^2}{p(n) c_n^2} = (\alpha_n C_n)^2 p(n).\]

 The above inequality together with Corollary \ref{est1} 
implies that for any real number $ 1> \epsilon>0$,
  and $n$ large enough,
  \[ (\alpha_n C_n)^2 \leq \frac{\left(\sum_{\lambda \in \PP(n)} c_{\lambda} \right)^2}
     {p(n) \left( \sum_{\lambda \in \PP(n)} c_\lambda^2 \right)} 
     \leq \frac{(1-\epsilon) n^2}{p(n)} ,\]
     proving the Corollary.
  \end{proof}

Here is a particular case of this corollary implies that the conjugacy class data $c_\lambda$ for the symmetric groups
is ``spread out'', just as the dimension data for symmetric groups.

\begin{corollary}
  Let $c_n = n!/(n-1) $ be the maximal size of any conjugacy class in 
  $\SS_n$. Let $A$ be a fixed positive real number less than 1, and
  define  positive real numbers $C_n \leq 1$ by:
\[C_np(n) =  |\{ \lambda \mid c_nA \leq c_\lambda \leq c_n \}|,\]
then $C_n$ tends to $0$; equivalently, fewer and fewer proportion of partitions
$\lambda$ have $c_n A \leq c_\lambda \leq c_n$.
\end{corollary}

\section*{Acknowledgments}

We thank U.K. Anandavardhanan, Purvi Gupta, Jiajun Ma, Manoj Yadav and Dan Romik for discussions on the themes arising in this work.
We thank ICTS for bringing the authors together to work on this problem.
The first author (AA) acknowledges support the DST FIST program - 2021 [TPN - 700661]. The second author (DP) thanks ANRF, India for its support through the
JC Bose National Fellowship of the Govt.
of India, project number JBR/2020/000006.

\newcommand{\etalchar}[1]{$^{#1}$}


\begin{thebibliography}{FFG{\etalchar{+}}06}

\bibitem[And76]{andrews-1976}
George~E. Andrews.
\newblock {\em The theory of partitions}.
\newblock Addison-Wesley Publishing Co., Reading, Mass.-London-Amsterdam, 1976.
\newblock Encyclopedia of Mathematics and its Applications, Vol. 2.

\bibitem[Buf12]{bufetov-2012}
Alexander~I. Bufetov.
\newblock On the {V}ershik-{K}erov conjecture concerning the
  {S}hannon-{M}c{M}illan-{B}reiman theorem for the {P}lancherel family of
  measures on the space of {Y}oung diagrams.
\newblock {\em Geom. Funct. Anal.}, 22(4):938--975, 2012.

\bibitem[CHM51]{chowla-herstein-moore-1951}
S.~Chowla, I.~N. Herstein, and W.~K. Moore.
\newblock On recursions connected with symmetric groups. {I}.
\newblock {\em Canad. J. Math.}, 3:328--334, 1951.

\bibitem[ET68]{erdos-turan-1968}
P.~Erd{\"{o}}s and P.~Tur\'an.
\newblock On some problems of a statistical group-theory. {IV}.
\newblock {\em Acta Math. Acad. Sci. Hungar.}, 19:413--435, 1968.

\bibitem[FFG{\etalchar{+}}06]{flajolet-et-al-2006}
Philippe Flajolet, Eric Fusy, Xavier Gourdon, Daniel Panario, and Nicolas
  Pouyanne.
\newblock A hybrid of {D}arboux's method and singularity analysis in
  combinatorial asymptotics.
\newblock {\em Electron. J. Combin.}, 13(1):Research Paper 103, 35, 2006.

\bibitem[FG12]{fulman-guralnik-2012}
Jason Fulman and Robert Guralnick.
\newblock Bounds on the number and sizes of conjugacy classes in finite
  chevalley groups with applications to derangements.
\newblock {\em Transactions of the American Mathematical Society},
  364(6):3023--3070, 2012.

\bibitem[FGT54]{frame-robinson-thrall-1954}
James~Sutherland Frame, {G. de B. Robinson}, and R.~M. Thrall.
\newblock The hook graphs of the symmetric group.
\newblock {\em Canad. J. Math}, 6:316--324, 1954.

\bibitem[FH91]{fulton-harris-1991}
W.~Fulton and J.~Harris.
\newblock {\em Representation theory}, volume 129 of {\em Graduate Texts in
  Mathematics}.
\newblock Springer-Verlag, New York, 1991.
\newblock A first course.

\bibitem[Gow83]{gow-1983}
Roderick Gow.
\newblock Properties of the characters of the finite general linear group
  related to the transpose-inverse involution.
\newblock {\em Proceedings of the London Mathematical Society}, 3(3):493--506,
  1983.

\bibitem[Han-Oku]{Han-Oku}
Akihide Hanaki and Tetsuro Okuyama.
\newblock Groups with some combinatorial properties.
\newblock {\em Osaka J. Math.}, 34, no. 2, 337–356, 1997.
  1983.
  
\bibitem[Jam80]{james-1980}
Rodney James.
\newblock The groups of order {$p\sp{6}$}\ ({$p$}\ an odd prime).
\newblock {\em Math. Comp.}, 34(150):613--637, 1980.

\bibitem[Kaw87]{kawanaka-1987}
Noriaki Kawanaka.
\newblock Shintani lifting and {G}el\cprime fand-{G}raev representations.
\newblock In {\em The {A}rcata {C}onference on {R}epresentations of {F}inite
  {G}roups ({A}rcata, {C}alif., 1986)}, volume 47, Part 1 of {\em Proc. Sympos.
  Pure Math.}, pages 147--163. Amer. Math. Soc., Providence, RI, 1987.

\bibitem[KP89]{kerov-pass-1989}
S.~V. Kerov and A.~M. Pass.
\newblock Representations, maximal with respect to dimension, of symmetric
  groups.
\newblock {\em Zap. Nauchn. Sem. Leningrad. Otdel. Mat. Inst. Steklov. (LOMI)},
  172:160--166, 172, 1989.

\bibitem[LS77]{logan-shepp-1977}
B.F. Logan and L.A. Shepp.
\newblock A variational problem for random {Y}oung tableaux.
\newblock {\em Adv. Math.}, 26:206--222, 1977.

\bibitem[Lus92]{lusztig-1992}
George Lusztig.
\newblock A unipotent support for irreducible representations.
\newblock {\em Adv. Math.}, 94(2):139--179, 1992.

\bibitem[Mac81]{macdonald-1981}
Ian~G Macdonald.
\newblock Numbers of conjugacy classes in some finite classical groups.
\newblock {\em Bulletin of the Australian Mathematical Society}, 23(1):23--48,
  1981.

\bibitem[MW55]{moser-wyman-1955}
Leo Moser and Max Wyman.
\newblock On solutions of $x^d = 1$ in symmetric groups.
\newblock {\em Canadian Journal of Mathematics}, 7:159--168, 1955.


\bibitem[Rie99]{Rie}
Jeffrey M. Riedl,
\newblock { Character degrees, class sizes, and normal subgroups of a certain class of p-groups.}
  \newblock{\em J. Algebra} 218 , no. 1, 190–215, 1999. 

\bibitem[Rom15]{romik-2015}
  D. Romik.
  \newblock{The surprising mathematics of longest increasing subsequences.}
 \newblock  Cambridge University Press, 2015.

  
\bibitem[Sag01]{sagan-2001}
Bruce~E. Sagan.
\newblock {\em The symmetric group}, volume 203 of {\em Graduate Texts in
  Mathematics}.
\newblock Springer-Verlag, New York, second edition, 2001.
\newblock Representations, combinatorial algorithms, and symmetric functions.

\bibitem[Sri79]{srinivasan-1979}
Bhama Srinivasan.
\newblock {\em Representations of finite {C}hevalley groups}, volume 764 of
  {\em Lecture Notes in Mathematics}.
\newblock Springer-Verlag, Berlin-New York, 1979.
\newblock A survey.

\bibitem[VK77]{vershik-kerov-1977}
A.M. Vershik and S.V. Kerov.
\newblock Asymptotics of {P}lancherel measure of symmetric group and the
  limiting form of {Y}oung tables.
\newblock {\em Sov. Math. Dolk.}, 18:527--531, 1977.

\bibitem[VK85]{vershik-kerov-1985}
A.~M. Vershik and S.~V. Kerov.
\newblock Asymptotic behavior of the maximum and generic dimensions of
  irreducible representations of the symmetric group.
\newblock {\em Funktsional. Anal. i Prilozhen.}, 19(1):25--36, 96, 1985.

\bibitem[VP09]{vershik-pavlov-2009}
A.~M. Vershik and D.~Pavlov.
\newblock Numerical experiments in problems of asymptotic representation
  theory.
\newblock {\em Zap. Nauchn. Sem. S.-Peterburg. Otdel. Mat. Inst. Steklov.
  (POMI)}, 373:77--93, 346--347, 2009.

\end{thebibliography}
\end{document}